\newcolumntype{x}[1]{>{\centering\arraybackslash\hspace{0pt}}p{#1}}
\theoremstyle{definition}
\newtheorem{theorem}{Theorem}[section]
\newtheorem{definition}[theorem]{{{Definition}}}
\newtheorem{example}[theorem]{{{Example}}}
\newtheorem{remark}[theorem]{{{Remark}}}
\newtheorem{corollary}[theorem]{{{Corollary}}}
\newtheorem{proposition}[theorem]{{{Proposition}}}
\newtheorem{lemma}[theorem]{{{Lemma}}}
\newcommand{\numberset}{\mathbb}
\newcommand{\N}{\numberset{N}}
\newcommand{\Z}{\numberset{Z}}
\newcommand{\F}{\numberset{F}}
\newcommand{\C}{\mathcal{C}}
\newcommand{\mC}{\mathcal{C}}
\newcommand{\mP}{\mathcal{P}}
\newcommand{\D}{\mathcal{D}}
\newcommand{\mO}{{\mathcal{O}}}
\newcommand{\I}{\mathcal{I}}
\newcommand{\kL}{\mathcal{L}}
\newcommand{\mZ}{\mathcal{Z}}
\newcommand{\Fq}{\F_q}
\newcommand{\Fm}{\F_{q^m}}
\newcommand{\mL}{\mathcal{L}}
\newcommand{\mF}{\mathcal{F}}
\newcommand{\mU}{\mathcal{U}}
\newcommand{\rk}{\textnormal{rk}}
\newcommand{\Hyp}{\mathrm{Hyp}}
\DeclareMathOperator{\GL}{GL}
\DeclareMathOperator{\cl}{cl}
\DeclareMathOperator{\cyc}{cyc}
\DeclareMathOperator{\Cl}{Cl}
\DeclareMathOperator{\Cyc}{Cyc}
\title{The Cyclic Flats of a $q$-Matroid}
\author[1,2]{Gianira N. Alfarano}
\affil[1]{Department of Mathematics and Computer Science, Eindhoven University of Technology, the Netherlands, g.n.alfarano@tue.nl}
\affil[2]{Institute of Mathematics, University of Zurich, Switzerland, gianiranicoletta.alfarano@math.uzh.ch}
\author[3]{Eimear Byrne}
\affil[3]{School of Mathematics and Statistics, University College Dublin, Belfield, Ireland, ebyrne@ucd.ie}
\date{}
\begin{document}

\maketitle
\begin{abstract}
   In this paper we develop the theory of cyclic flats of $q$-matroids. We show that the cyclic flats, together with their ranks, uniquely determine a $q$-matroid and hence derive a new $q$-cryptomorphism.
   We introduce the notion of $\F_{q^m}$-independence of an $\F_q$-subspace of $\F_q^n$ and we show that $q$-matroids generalize this concept, in the same way that matroids generalize the notion of linear independence of vectors over a given field.
\end{abstract}

\section*{Introduction}
The concept of $q$-matroid may be traced back to Crapo's PhD thesis \cite{crapo1964theory}. More recently, the relation between rank-metric codes and $q$-matroids has led to these combinatorial objects getting a lot of attention from researchers; see for instance \cite{jurrius2018defining, gorla2019rank, shiromoto2019codes, ghorpade2020polymatroid, byrne2022constructions, byrne2020cryptomorphisms, byrne2021weighted, ceria2021direct}. Indeed, it is well-known that $q$-matroids generalize $\F_{q^m}$-linear rank-metric codes, just as classical matroids generalize linear codes in the Hamming metric.

As in traditional matroid theory, there are many equivalent ways to describe a $q$-matroid axiomatically, which are called $q$-\emph{cryptomorphisms}. A full exposition of these is given in \cite{byrne2020cryptomorphisms}, in terms of rank function, independent spaces, flats, circuits, bases, spanning spaces, the closure function, hyperplanes, open spaces etc.

In matroid theory, one of the most crucial objects is the \emph{lattice of flats} $\mF(M)$ of a matroid~$M$, since it uniquely determines the matroid. Another lattice with the same property is the lattice of \emph{cyclic sets}. This led many researchers to investigate the intersection between these lattices, namely the collection of \emph{cyclic flats} of the matroid.
Cyclic flats have also played several important roles such as in the work of Brylawski, who showed in \cite{brylawski1975affine} that the cyclic flats of a matroid, together with their ranks uniquely determine the matroid. Moreover, Eberhardt showed that they provide the Tutte polynomial in \cite{eberhardt2014computing}, and Bonin and de Mier showed in \cite{bonin2008lattice} that every lattice is isomorphic to the lattice of cyclic flats of a matroid. Applications to coding theory have been recently investigated: it has been proved that many central invariants in coding theory can be naturally described in terms of the lattice of cyclic flats of the associated matroid; see \cite{freij2021cyclic, westerback2016combinatorics}. 
Furthermore, the lattice of cyclic flats of classical polymatroids has also been studied; see \cite{westerback2015applications, westerback2018polymatroidal, csirmaz2020cyclic}.

In this paper we consider a $q$-analogue of this theory: we define the lattice of cyclic flats of a $q$-matroid and show that it, along with the ranks of its elements, determines the $q$-matroid. We furthermore propose a new cryptomorphism of $q$-matroids based on cyclic flat axioms. We consider the codes associated with representable $q$-matroids and show that the cyclic spaces of a $q$-matroid are supports of elements of the corresponding dual code.

The outline of this paper is as follows. In Section \ref{sec:cyclicflats},
a cyclic flat is defined along with a \emph{cyclic operator} and, as for a flat, with a \emph{closure operator}.
We use the two operators to show that the collection of cyclic flats of a $q$-matroid is a lattice that is not induced by its lattice of flats, nor its cyclic spaces, nor the lattice of subspaces of the ground space. We show that the lattice of cyclic flats, together with their ranks, fully determines the $q$-matroid. In Section \ref{sec:cryptomorphism}, we exploit the theory of cyclic flats to establish a new $q$-cryptomorphism and we provide a necessary and sufficient condition for a lattice $\mZ$ of subspaces endowed with a function $r_\mZ:\mZ\to\Z$ to be the lattice of cyclic flats of a $q$-matroid.
In Section \ref{sec:rank-metric}, we further investigate the connection between rank-metric codes and $q$-matroids. We introduce the notion of $\F_{q^m}$-\emph{independence} of an $\F_q$-subspace of $\F_q^n$ and we show that $q$-matroids generalize this concept, in the same way that matroids generalize the notion of linear independence of vectors over a given field. We conclude with a short discussion in Section \ref{subsec:q-polymatroids} on the $q$-polymatroid case.

\paragraph{Notation} Throughout this paper, $n$ denotes a fixed positive integer, $q$ is a prime power, and $\F_q$ denotes the finite field of order $q$. We denote by $E$ a fixed $n$-dimensional vector space over $\F_q$ and by $\mathcal{L}(E)$ the lattice of subspaces of $E$, ordered with respect to inclusion. We write $A\leq B$ to indicate that $A$ is a subspace of $B$. If a subspace is to be understood as being $1$-dimensional, we represent it by a lowercase letter, so for instance, $x\leq A$ means that $x$ is a one-dimensional subspace of $A$. For every $A \in \mL(E)$, we write
$\mP(A)$ to denote its collection of $1$-dimensional subspaces. For every $A\in\mL(E)$, we denote by $\Hyp(A)$ the set of hyperplanes of $A$, i.e. the set of codimension-$1$ subspaces of $A$. These are not to be confused with hyperplanes in the sense of matroids/$q$-matroids. The standard basis  of the space $\F_q^n$ is denoted by $\{e_1,\ldots, e_n\}$. Finally, for a space $A\leq E$, we denote by $A^\perp$ the orthogonal complement of $A$ in $E$, with respect to a fixed non-degenerate symmetric bilinear form.

\section{Background}\label{sec:background}

In this section, we recall some preliminary notions on $q$-matroids and rank-metric codes.
The following definition of $q$-matroid is given in terms of a rank function; see \cite{jurrius2018defining}. Notice that this definition does not require $E$ to be
a vector space over a finite field, however, we will assume that a $q$-matroid is an object defined with respect to an $\F_q$-vector space.

 \begin{definition}\label{rankfunction}
	A $q$-matroid $M$ is a pair $(E,r)$ where
	$r$ is an integer-valued
	function defined on $\mL(E)$ with the following properties:
	\begin{itemize}
		\item[(R1)] Boundedness: $0\leq r(A) \leq \dim A$, for all $A\in \kL(E)$.
		\item[(R2)] Monotonicity: $A\leq B \Rightarrow r(A)\leq r(B)$, for all $A,B\in\mL(E)$.
		\item[(R3)] Submodularity: $r(A+ B)+r(A\cap B)\leq r(A)+r(B)$, for all $A,B\in\mL(E)$.
	\end{itemize}
	The function $r$ is called the {\bf rank function} of the $q$-matroid.
\end{definition}

Given a $q$-matroid $(E,r)$, we define the \textbf{nullity} function $\nu$ to be
$$ \nu: \mL(E)\to \Z, \ A \mapsto \dim(A)-r(A).$$
From Definition \ref{rankfunction}, it follows that the nullity has the following properties:
\begin{itemize}
    \item[(n1)] $\nu(A)\geq 0$, for all $A\in\mL(E)$.
    \item[(n2)] $A\leq B \Rightarrow \nu(A)\leq \nu(B)$, for all $A,B\in\mL(E)$.
    \item[(n3)] $\nu(A+B)+\nu(A\cap B) \geq \nu(A)+\nu(B)$, for all $A,B\in\mL(E)$.
\end{itemize}

 	\begin{definition}\label{independentspaces}
		Let $(E,r)$ be a $q$-matroid.
		A subspace $A\in\mL(E)$ is called an {\bf independent} space of $(E,r)$ if $r(A)=\dim A$.
		We write $\I_r$ to denote the set of independent spaces of the $q$-matroid $(E,r)$:
		\[\I_r :=\{ I \in \kL(E) \mid \dim(I) = r(I) \}. \]
		 If $x\leq E$ and $r(x)=0$, then $x$ is called \textbf{loop} of $M$.
		A subspace that is not an independent space of $(E,r)$ is called a
		{\bf dependent space} of $(E,r)$.
		We call $C \in \kL(E)$ a {\bf circuit} if it is itself a dependent space and every proper subspace of $C$ is independent. We write $\D_r$ and $\C_r$ to denote the sets of dependent spaces and the circuits
		of the $q$-matroid $(E,r)$, respectively.
		A subspace is called an {\bf open space} of $(E,r)$ if it is a (vector space) sum of circuits. We write ${\mO}_r$ to denote the set of open spaces of $(E,r)$.
	\end{definition}
	
	We define a closure operator as follows (c.f. \cite[Definition 5]{byrne2022constructions}).

    \begin{definition}\label{def-closure}
        Let $(E,r)$ be a $q$-matroid.
        For each $A \in \kL(E)$, define $$\Cl_r(A):=\{x \in \kL(E)\mid r(A+x)=r(A)\}.$$
    	The {\bf closure function} of a $q$-matroid $(E,r)$ is the function
    	defined by
    	\[\cl_r:\mathcal{L}(E) \to\mathcal{L}(E), \  A \mapsto \cl_r(A)=\sum_{x \in \Cl_r(A)} x .
    	\]
    \end{definition}

    \begin{definition}\label{def:flat}
	A subspace $A$ of a $q$-matroid $(E,r)$ is called a {\bf flat} or {\bf closed space} if for all 
	$x \in \kL(E)$ such that $x\nleq A$, we have $$r(A+x)>r(A).$$
	We write ${\mathcal F}_r$ to denote the set of flats of the $q$-matroid $(E,r)$, that is
	\[ \mF_r:= \{ A \in \kL(E) \mid r(A+x)>r(A) \:\:\forall x \in \kL( E), \ x \nleq A \}. \]

\end{definition}
\noindent
If it is clear from the context, we will simply write $\I,\D,\C,\mO, \cl $ in place of
$\I_r,\D_r,\C_r, \mO_r,\cl_r$.

In \cite{byrne2020cryptomorphisms}, several cryptomorphisms of $q$-matroids have been established, which give equivalent ways of defining a $q$-matroid.
Here, we recall the \emph{closure function axioms}. The closure function of a $q$-matroid $(E,r)$ necessarily satisfies the closure function axioms; \textcolor{black}{see \cite[Theorem 68]{jurrius2018defining}.}

\begin{definition}\label{closure-axioms}
Let $\cl:\mathcal{L}(E)\to \mathcal{L}(E)$ be a map. We define the following {\bf closure axioms}.
\begin{itemize}
\item[(Cl1)] $A\leq \cl(A)$, for all $A\in\mL(E)$.
\item[(Cl2)] $A\leq B\Rightarrow \cl(A)\leq \cl(B)$, for all $A,B\in\mL(E)$.
\item[(Cl3)] $\cl(A)=\cl(\cl(A))$, for all $A\in\mL(E)$.
\item[(Cl4)] For all $x,y,A \in \kL(E)$, if $y\leq\cl(A+x)$ and $y\not\nleq\cl(A)$, then $x\leq\cl(A+y)$.
\end{itemize}
If $\cl:\mathcal{L}(E)\to \mathcal{L}(E)$ satisfies the closure axioms (Cl1)-(Cl4), then  we call it a {\bf closure function}.
\end{definition}
\noindent
Similar to the matroid case for any $A\in\mL(E)$, we have that $r(A)=r(\cl(A))$.

\noindent
Finally, we define the restriction and the contraction operations for $q$-matroids; see \cite{byrne2022constructions,jurrius2018defining}.

\begin{definition}\label{def:restriction}
 Let $M:=(E,r)$ be a $q$-matroid and $A\leq E$ be any subspace of $E$.
 For every space $T\leq A$, we define $r_{M|_{A}}(T):=r(T)$. The $q$-matroid  $M|_{A}:=(A,r_{M|_{A}})$ is called the \textbf{restriction of $M$ to $A$}. Define a map
 $$r_{M/A} : \mathcal{L}(E/A) \to \mathbb{Z}, \ T\mapsto r(\pi^{-1}(T))-r(A),$$
where $\pi: E \to E/A$ is the canonical projection.
 Then the $q$-matroid $M/A :=(E/A,r_{M/A})$ is called the \textbf{contraction of $M$ from $A$}.
\end{definition}

We conclude with the notion of {\em dual matroid} \cite{jurrius2018defining}, which we will use in Sections~\ref{sec:cyclicflats} and~\ref{sec:rank-metric}.

\begin{definition}
Let $M=(E,r)$ be a $q$-matroid and consider the function
\begin{align*}
    r^\ast: \mathcal{L}(E)\to \Z, \ A \mapsto \dim(A)-r(E)+r(A^\perp).
\end{align*}
Then $r^\ast$ is a rank function and $M^*=(E,r^*)$ is a $q$-matroid, called the \textbf{dual $q$-matroid} of $M$.
\end{definition}


\section{Cyclic Spaces and Cyclic Flats}\label{sec:cyclicflats}
This section is devoted to the introduction of the $q$-analogue of cyclic flats and to present some of the properties of these objects. This will be the starting point for establishing a description of $q$-matroids in terms of cyclic flats.
For the remainder, $M=(E,r)$ will denote an arbitrary but fixed $q$-matroid with ground space $E$ and rank function~$r$.

\subsection{Cyclic Spaces}
We first define what it means for a space to be a cyclic subspace of a $q$-matroid.

\begin{definition}\label{def:cyclic}
  We say that $A\in\mL(E)$ is \textbf{cyclic} if $A=\langle 0 \rangle$ or $r(A)=r(B)$ for all~$B\in\Hyp(A)$.
\end{definition}

\textcolor{black}{Equivalently, a space $A \in\mL(E)$ is cyclic if $\cl(A)=\cl(B)$ for every $B \in \Hyp(A)$.}

\begin{example}
A trivial example of a cyclic space is given by a circuit of $M$. Indeed, if $C$ is a circuit and $D\in\Hyp(C)$, then $r(D)=\dim(D)=\dim(C)-1=r(C).$
\end{example}

\begin{definition}\label{def:cycclosed}
    For each $A \in \kL(E)$, define the set:
    \[\Cyc_r(A):=\{x \leq A\mid  r(B+x) = r(B) \; \textnormal{ for all }\; B \in\Hyp(A)\}.\]
    If it is clear from the context, we will write $\Cyc:=\Cyc_r$ for the $q$-matroid $M=(E,r)$.
\end{definition}

\begin{lemma}\label{lem:Cyc}
   Let $A \in\mL(E)$. Then
   \[
      \Cyc(A)= \mP\left( \sum_{x \in \Cyc(A)} x \right) = \mP\left( \bigcap_{H \in \Hyp(A)} \cl(H) \cap A\right).
   \]
\end{lemma}

\begin{proof}
  Let $x,y \in \Cyc(A)$ and let $z\leq x+y$. Let $D\in\Hyp(A)$. If $z\leq D$ then $r(z+D)=r(D)$.  Further, this will occur if both $x$ and $y$ are subspaces of $D$. Suppose that $z \nleq D$. Then we may assume that $x \nleq D$ and so~$r(z+D)=r(A)=r(x+D) = r(D)$.
  It follows that $z \in \Cyc(A)$ and so $\Cyc(A)= \mP\left( \sum_{x \in \Cyc(A)} x \right)$.
  To see that the second equality holds, note that for any $x \leq \mL(E)$, we have $x \in \Cyc(A)$ if and only if $x$ is contained $A$ and in the closure of every member of $\Hyp(A)$, which holds if and only if $x \leq \displaystyle{ \bigcap_{H \in \Hyp(A)} \cl(H) \cap A}$.
\end{proof}

Note that if $x$ is a loop contained in $A\in\mL(E)$, then $x\in\Cyc(A)$.

\begin{proposition}\label{prop:cyc}
    Let $A \in\mL(E)$. The following are equivalent.
    \begin{enumerate}
        \item[(1)] $A$ is cyclic.
        \item[(2)] For all $x \leq A$, we have that $r(x+B)=r(B)$ for all $B \in \Hyp(A)$.
        \item[(3)] $\Cyc(A)= \mP(A)$.
        \item[(4)] $\displaystyle A = \sum_{x \in \Cyc(A)} x$.
        \item[(5)] $\displaystyle{ A \leq \bigcap_{H \in \Hyp(A)} \cl(H)}$.
    \end{enumerate}
\end{proposition}
\begin{proof}
  The equivalence of (1) and (2) is immediate from the Definition \ref{def:cyclic} and the equivalence of (2) and (3) from Definition \ref{def:cycclosed}.
  If (3) holds then $\displaystyle A = \sum_{x \in \mP(A)}  x=\sum_{x \in \Cyc(A)} x$, and so we have (4). Conversely, if (4) holds, then we see that (3) holds by Lemma \ref{lem:Cyc}.
  That (1) and (5) are equivalent follows immdiately from Definition \ref{def:cyclic}.
\end{proof}

\begin{lemma}\cite[Lemma 3.2]{byrne2022constructions}\label{lem:r(B+x)}
   Let $A,B,x\in\mL(E)$, with $B \leq A$.
   If $r(B+x)=r(B)$ then $r(A+x)=r(A)$.
\end{lemma}

\begin{lemma}\label{lem:sumCicyclic}
   Let $C_1,C_2,\ldots,C_\ell$ be a collection of cyclic subspaces of $E$.
   Then $C_1+\cdots + C_\ell$ is also cyclic.
\end{lemma}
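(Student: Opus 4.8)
The plan is to bypass any rank computation and argue purely with the cyclic operator, using the equivalence ``cyclic $\iff$ cyclically closed'' noted after Definition~\ref{def:cycclosed} together with properties (cyc1) and (cyc2) from Theorem~\ref{th:cyc123}. Write $C := C_1 + \cdots + C_\ell$. Each $C_i$ is cyclic, hence cyclically closed, so $\cyc(C_i) = C_i$. Since $C_i \leq C$, monotonicity (cyc2) gives $C_i = \cyc(C_i) \leq \cyc(C)$ for every $i$, and summing over $i$ yields $C = \sum_i C_i \leq \cyc(C)$. On the other hand (cyc1) gives $\cyc(C) \leq C$. Therefore $\cyc(C) = C$, i.e.\ $C$ is cyclically closed, equivalently cyclic. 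No induction is needed, and there is essentially no obstacle here: the only point to keep straight is that (cyc2) is a genuine monotonicity statement and that being cyclic is the same as being a fixed point of $\cyc$.

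If one prefers a proof that does not invoke the $\cyc$ operator, here is the direct route. By induction it suffices to treat $\ell = 2$; set $A = C_1 + C_2$ and let $D \leq A$ have codimension $1$ in $A$. Pick $v \in A \setminus D$ so that $A = D + \langle v\rangle$, and write $v = v_1 + v_2$ with $v_i \in C_i$. A dimension count shows $\dim(D \cap C_i) = \dim D + \dim C_i - \dim(D + C_i) \geq \dim C_i - 1$, so $\codim_{C_i}(D \cap C_i) \leq 1$. If this codimension is $0$ then $C_i \leq D$ and $v_i \in D$, so $r(D + \langle v_i\rangle) = r(D)$ trivially. If it is $1$, then cyclicity of $C_i$ together with the fact that every $1$-dimensional subspace of a cyclic space lies in its $\Cyc$-set gives $r\big((D \cap C_i) + \langle v_i\rangle\big) = r(D \cap C_i)$, and Lemma~\ref{lem:r(B+x)} applied to $D \cap C_i \leq D$ upgrades this to $r(D + \langle v_i\rangle) = r(D)$. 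Thus $r(D + \langle v_i\rangle) = r(D)$ for $i = 1, 2$.

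Finally, a single submodularity step combines the two: from $r\big((D + \langle v_1\rangle) + \langle v_2\rangle\big) + r\big((D+\langle v_1\rangle)\cap(D+\langle v_2\rangle)\big) \leq r(D+\langle v_1\rangle) + r(D+\langle v_2\rangle) = 2r(D)$ and $r\big((D+\langle v_1\rangle)\cap(D+\langle v_2\rangle)\big) \geq r(D)$ we get $r(D + \langle v_1\rangle + \langle v_2\rangle) \leq r(D)$, hence equality by monotonicity. Since $\langle v\rangle \leq \langle v_1\rangle + \langle v_2\rangle$, monotonicity gives $r(A) = r(D + \langle v\rangle) = r(D)$. As $D$ was an arbitrary codimension-$1$ subspace of $A$, this shows $A = C_1 + C_2$ is cyclic, and the general case follows by induction. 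Between the two arguments I would present the first, since all the needed machinery is already in place in Theorem~\ref{th:cyc123}.
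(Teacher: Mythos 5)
Both of your arguments are correct, and your preferred (first) one takes a genuinely different route from the paper. The paper proves the lemma by direct rank computation: for $X \leq C_1+C_2$ of codimension $1$ it distinguishes whether $X \cap C_1 = C_1$ or $\codim_{C_1}(X\cap C_1)=1$, in the latter case picks a one-dimensional $y\leq C_1$ with $y+X=C_1+C_2$, uses cyclicity of $C_1$ on $X\cap C_1$ and upgrades via Lemma~\ref{lem:r(B+x)} to get $r(X)=r(C_1+C_2)$, and then iterates. Your first argument instead exhibits the lemma as a purely formal consequence of Theorem~\ref{th:cyc123}: since $\cyc$ is contractive, monotone and idempotent and its fixed points are exactly the cyclic spaces, those fixed points are automatically closed under sums, because $C_i=\cyc(C_i)\leq\cyc(C)$ for each $i$ forces $C\leq\cyc(C)\leq C$. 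This is shorter and more conceptual; the only point worth checking is that the paper's proof of (cyc2) does not itself invoke Lemma~\ref{lem:sumCicyclic} (it uses only Remark~\ref{rem:closure-cyc} and Lemma~\ref{lem:sub-codim1}), so there is no circularity. What the paper's computation buys is an explicit handle on the rank equality and on the role of Lemma~\ref{lem:r(B+x)}, which recurs in nearby results; what your argument buys is the observation that closure of cyclic spaces under join is really a statement about interior operators, independent of any rank bookkeeping. Your second, rank-theoretic argument is close to the paper's but replaces the paper's choice of a single generator $y$ by the decomposition $v=v_1+v_2$ followed by one submodularity step; this too is sound and sidesteps the paper's small case analysis on which of the $\codim_{C_i}(X\cap C_i)$ equals $1$.
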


\begin{proof}
    Let $C=C_1+C_2$ and let $X\in\Hyp(C)$. 
    We claim that $r(C)=r(X)$. 
    Clearly, either $X \cap C_1=C_1$, or $X\cap C_1$ has codimension $1$ in $C_1$.
    Suppose the latter case, so there exists $y \leq C_1$ such that $y+X = C$. 
    Therefore, since $C_1$ is cyclic, we have that $r(y+(X \cap C_1))=r(X \cap C_1)$ and hence by Lemma \ref{lem:r(B+x)} we have $r(X)=r(y+X)=r(C)$. 
    On the other hand, if $X \cap C_1 = C_1$ then $X \cap C_2 \in \Hyp(C_2)$, since otherwise we arrive at the contradiction:
    $$C=C_1+C_2 = (X \cap C_1) + (X \cap C_2) \leq X \cap (C_1 + C_2) = X \cap C = X.$$
    As before, $X \cap C_2 \in \Hyp(C_2)$ implies that $r(C)=r(X)$.
    It follows that $C_1+C_2$ is a cyclic space. Now apply the same argument, iteratively, to arrive at the required result.
\end{proof}

Now consider the following operator.

\begin{definition}\label{def:cycop}
    The {\bf cyclic operator} of $M$ is the function
    	defined by
    	\begin{align*}
    	    \cyc_r:\mathcal{L}(E) \to\mathcal{L}(E), \ A \mapsto \cyc_r(A):=\sum_{\substack{C \leq A \\C \textnormal{ is cyclic}}} C.
    	\end{align*}
    If it is clear from the context, we will write $\cyc:=\cyc_r$ for the $q$-matroid $M$.
    We say that $A\in\mL(E)$ is {\bf cyclically closed}
    if
    $$\cyc(A) = A.$$
\end{definition}
\noindent From Lemma \ref{lem:sumCicyclic}, we see that $\cyc(A)$ is cyclic and is the unique maximal cyclic subspace of $A$.

One well-known construction of a $q$-matroid arises from the generator matrix of an $\F_{q^m}$-linear code; see \cite{jurrius2018defining, gorla2019rank}. Let $G$ be a $k \times n$ matrix over $\F_{q^m}$ and for every $U\in\mL(\F_q^n)$, let $A^U$ be a matrix whose columns form a basis of $U$. Then the map
\begin{equation}\label{eq:rank1}
    r:\mL(\F_q^n) \to \Z, \  U\mapsto \rk(G A^U),
\end{equation}
is the rank function of a $q$-matroid, which we denote by $M[G]$.

\begin{example}
Consider $\F_8=\F_{2^3}$ and let $\alpha\in\F_8$ be a primitive element satisfying $\alpha^3=\alpha+1$. Let $G$ be the following matrix with entries in $\F_8$,
$$ G:= \begin{pmatrix}
1 & \alpha &  \alpha^2 & 1 \\
0 & 1 & \alpha & 0 \\
\end{pmatrix}\in\F_8^{2\times 4}.$$
Let $r$ be the rank function of $M[G]$ and let $A_1:=\langle e_2,e_3\rangle\in\mL(\F_2^4)$. Then $r(A_1)=1$ and for all $B\in\Hyp(A_1)$, we have that $r(B)=1$ and hence 
$A_1$ is cyclically closed.
On the other hand, $A_2:=\langle e_1+e_2,e_3,e_4\rangle$ is not cyclically closed, indeed $\cyc(A_2)=\langle e_1+e_2+e_4,e_3 \rangle$.
\end{example}

We now list some basic properties of the cyclic operator.

\begin{theorem}\label{th:cyc123}
    For every $A,B\in\mL(E)$, the cyclic operator satisfies the following properties.
   \begin{itemize}[label={}]
       \item (cyc1)\label{pcyc1} $\cyc(A)\leq A$.
       \item (cyc2)\label{pcyc2} $A\leq B\Rightarrow\cyc(A)\leq \cyc(B)$.
       \item (cyc3)\label{pcyc3} $\cyc(\cyc(A))=\cyc(A)$.
    \end{itemize}
\end{theorem}
\begin{proof}
    It is immediate by Definition \ref{def:cycop} that (cyc1) holds.
    Let $A,B \in\mL(E)$.
    \textcolor{black}{From (cyc1), $\cyc(A) \leq A \leq B$ and hence $\cyc(A)$ is a cyclic subspace of $B$, which must therefore be contained in $\cyc(B)$, by definition of the cyclic closure.}
    From (cyc1) and (cyc2), we have that $\cyc(\cyc(A)) \leq \cyc(A)$.
    Since $\cyc(A)$ is itself cyclic and contained in $\cyc(A)$, we have
    $$\cyc(A) \leq \sum_{\substack{C \leq \cyc(A)\\
    C \textnormal{ is cyclic }}} C = \cyc(\cyc(A)).$$
    Therefore, (cyc3) holds.
\end{proof}

\begin{corollary}
     Let $A\in\mL(E)$ and let $\cyc(A) \leq X \leq A$. Then $\cyc(X)=\cyc(A)$.
\end{corollary}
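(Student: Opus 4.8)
The plan is to show the two inclusions $\cyc(A) \leq \cyc(X)$ and $\cyc(X) \leq \cyc(A)$ separately, using the monotonicity property (cyc2) and Lemma~\ref{lem:sumCicyclic} together with the fact (noted after the lemma on $1$-dimensional subspaces of $\cyc(A)$) that a subspace is cyclic if and only if it is cyclically closed.

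First I would establish $\cyc(A) \leq \cyc(X)$. By (cyc3), $\cyc(A)$ is cyclically closed, hence cyclic. Since $\cyc(A) \leq X$ by hypothesis, and $\cyc(A)$ is cyclic, we may apply the cyclic operator to $\cyc(A)$ viewed as a subspace of $X$: because $\cyc(A) = \cyc(\cyc(A))$ and $\cyc(A) \leq X$, property (cyc2) gives $\cyc(A) = \cyc(\cyc(A)) \leq \cyc(X)$. This direction is essentially immediate from (cyc2) and (cyc3).

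The reverse inclusion $\cyc(X) \leq \cyc(A)$ follows directly from (cyc2) applied to $X \leq A$, which gives $\cyc(X) \leq \cyc(A)$ with no extra work. Combining the two inclusions yields $\cyc(X) = \cyc(A)$.

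I expect there to be essentially no obstacle here: the corollary is a formal consequence of the monotonicity (cyc2) and idempotence (cyc3) of the cyclic operator, in exact analogy with the standard fact for closure operators that applying the operator to anything sandwiched between $\cl(A)$ and $A$ recovers $\cl(A)$. The only point requiring a moment's care is that $\cyc(A) \leq X$ is indeed part of the hypothesis, so that (cyc2) can legitimately be applied to the chain $\cyc(A) \leq X \leq A$; everything else is bookkeeping.
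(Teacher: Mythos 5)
Your proof is correct and is essentially the intended argument: the paper states this corollary without proof, and it is exactly the formal consequence of monotonicity (cyc2) and idempotence (cyc3) from Theorem~\ref{th:cyc123} that you describe. (Your passing mention of Lemma~\ref{lem:sumCicyclic} is unnecessary --- the two properties (cyc2) and (cyc3) suffice --- but this does not affect the validity of the argument.)
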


\begin{remark}\label{lem:cyc_dep}
Every \textcolor{black}{nonzero} cyclic space  is dependent. Indeed, if \textcolor{black}{$A\ne \langle 0 \rangle$} is cyclic and independent, then all its subspaces have to be independent, so, clearly, $A$ cannot have the same rank as its hyperplanes.
\end{remark}

Lemma \ref{lem:sumCicyclic} shows that every open space (every sum of circuits) is cyclic. We will shortly see a converse. First, recall the following result. 

\begin{lemma}\cite[Proposition 86]{byrne2020cryptomorphisms}\label{lem:Fperpopenspace}
The flats of the $q$-matroid $M = (E, r)$ are the orthogonal spaces of the open spaces of the dual $q$-matroid $M^\ast$. That is, $F$ is a flat of $M$ if and only if $F^\perp$ is an open space of $M^\ast$.
\end{lemma}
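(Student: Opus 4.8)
The statement to prove is Lemma \ref{lem:Fperpopenspace}: \emph{The flats of the $q$-matroid $M = (E,r)$ are the orthogonal spaces of the open spaces of the dual $q$-matroid $M^\ast$.}

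The plan is to exploit the duality results already established in the excerpt, in particular Theorem \ref{thm:cycequivalence} and the stated fact that the open spaces of a $q$-matroid are exactly the orthogonal spaces of the flats of its dual. Concretely, the excerpt tells us (just before Theorem \ref{thm:cycequivalence}) that if $\mathcal F^\ast$ denotes the flats of $M^\ast$ then the open spaces of $M$ are $\mathcal O = \{F^\perp \mid F \in \mathcal F^\ast\}$. Applying this identity with $M$ replaced by $M^\ast$, and using that $(M^\ast)^\ast = M$ (which follows by a direct computation with the dual rank function, $r^{\ast\ast}(A) = \dim A - r^\ast(E) + r^\ast(A^\perp) = \dim A - (\dim E - r(E) + r(0)) + (\dim A^\perp - r(E) + r(A)) = r(A)$, using $(A^\perp)^\perp = A$), we get that the open spaces $\mathcal O^\ast$ of $M^\ast$ are the orthogonal spaces of the flats of $(M^\ast)^\ast = M$, i.e. $\mathcal O^\ast = \{F^\perp \mid F \in \mathcal F_r\}$. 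Taking orthogonal complements of both sides and using $(F^\perp)^\perp = F$ then yields $\mathcal F_r = \{O^\perp \mid O \in \mathcal O^\ast\}$, which is exactly the claim.

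The steps, in order, would be: first, record that $(M^\ast)^\ast = M$ by unwinding the definition of the dual rank function together with $(A^\perp)^\perp = A$ and $r^\ast(0) = \dim(E) - r(E) + r(E^\perp)$, noting $E^\perp = 0$ so $r^\ast(E) = \dim E - r(E) + r(0) = \dim E - r(E)$ when $r(0)=0$ — actually more carefully $r^\ast(0) = 0 - r(E) + r(E^\perp) = -r(E)+r(E)=0$ is not right; better to just compute $r^{\ast\ast}(A)$ directly as above and observe it equals $r(A)$. Second, invoke the already-stated description of open spaces as orthogonals of dual flats, applied to the $q$-matroid $M^\ast$. Third, take perpendiculars and use the involutive property of $\perp$ to conclude. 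Alternatively, one can give a self-contained argument directly from Theorem \ref{thm:cycequivalence} combined with Lemma \ref{lem:Fperpopenspace}-type reasoning, or even more directly: a subspace $A$ is a flat of $M$ iff $r(A+x) > r(A)$ for every line $x \nleq A$; translating via $r^\ast$ and $A \leftrightarrow A^\perp$ shows this is equivalent to $A^\perp$ being a sum of circuits of $M^\ast$, i.e. an open space of $M^\ast$. I would prefer the short duality-chasing argument since the identity $\mathcal O = \{F^\perp \mid F \in \mathcal F^\ast\}$ is already granted.

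The main obstacle, such as it is, is making sure the double-dual identity $(M^\ast)^\ast = M$ is available or proved; the excerpt defines $M^\ast$ but does not explicitly state that dualizing twice recovers $M$, so I would include the one-line rank computation to be safe. Everything else is a formal manipulation of the perp operator (using that $\perp$ is an inclusion-reversing involution on $\mathcal L(E)$, so it sends the flats of $M$ bijectively to a collection of subspaces, and the granted open-space description pins that collection down as the open spaces of $M^\ast$). No genuine combinatorial difficulty arises beyond bookkeeping with complements.
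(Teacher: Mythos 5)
The paper gives no proof of this lemma at all: it is imported verbatim as a citation to \cite[Proposition~86]{byrne2020cryptomorphisms}, so there is no in-paper argument to compare yours against. Your derivation is nevertheless correct and is the natural one: it takes the other duality fact the paper grants (stated just before Theorem~\ref{thm:cycequivalence}, namely $\mathcal{O}=\{F^\perp \mid F\in\mathcal{F}^*\}$, also cited from \cite{byrne2020cryptomorphisms}), applies it with $M$ replaced by $M^*$, and uses the involutions $(M^*)^*=M$ and $(F^\perp)^\perp=F$ to flip the statement around. The one computation you need, $r^{**}(A)=r(A)$, you carry out correctly in your first paragraph (it uses $r(\langle 0\rangle)=0$, which follows from (R1), and $\dim A+\dim A^\perp=\dim E$); the momentary confusion about $r^*(\langle 0\rangle)$ in your second paragraph is harmless since you discard that route and the direct computation stands. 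Two small remarks: the whole argument is only as strong as the granted identity $\mathcal{O}=\{F^\perp\mid F\in\mathcal{F}^*\}$, so formally you have shown the two cited duality statements are equivalent rather than proved either from scratch --- which is entirely appropriate here; and your sketched alternative (translating the flat condition $r(A+x)>r(A)$ through $r^*$, in the spirit of the $(1)\Leftrightarrow(3)$ equivalence of Theorem~\ref{thm:cycequivalence}) would give a self-contained proof if one were demanded. No gap.
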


In Proposition \ref{thm:cycequivalence} we characterize the cyclic spaces of $M$. In order to do this, for every integer $i$ consider the following set
$$ N_i := \{A \in\mL(E) \mid \nu(A)=i\},$$
i.e., the set of subspaces of $E$ with nullity equal to $i$.

The equivalence of (1) and (3) of the following proposition was shown in \cite[Lemma 13]{johnsen}. Note that in \cite[Definition 11]{johnsen}, a cyclic space is defined as being minimal with respect to inclusion in $N_a$ for some $a$.
\begin{proposition}\label{thm:cycequivalence}
 Let $A\in\mL(E)$ with $\nu(A)=a$. The following are equivalent.
 \begin{enumerate}[label={}]
     \item[(1)] $A$ is cyclic in $M$.
     \item[(2)] $A$ is minimal with respect to inclusion in $N_a$.
     \item[(3)] $A^\perp$ is a flat of the dual $q$-matroid $M^\ast$.
     \item[(4)] $A$ is the vector space sum of the circuits contained in $A$.
 \end{enumerate}
\end{proposition}

\begin{proof}
~{\bf }\\
\noindent
{$(1) \Leftrightarrow (2)$}:
Let $D\in\Hyp(A)$. Then
$\nu(D) = \dim(A)-1 - r(D) \geq \dim(A)-1 - r(A) = a-1.$
If $A$ is cyclic, then $r(D)=r(A)$ and so $\nu(D)=a-1$, which implies that $D \notin N_a$. Conversely, if $A$ is minimal in $N_a$, then $D \notin N_a$ and so $r(D)=r(A)$. Since this holds for arbitrary $D$ we obtain the equivalence of (1) and (2).
\\
\noindent
{$(1) \Leftrightarrow (3)$}:
$A^\perp$ is a flat in $M^*$ if and only if for all $x \nleq A^\perp$ we have
$r^*(A^\perp +x) = r^*(A^\perp) +1.$ For any $x \nleq A^\perp$, we have
\begin{eqnarray*}
   r^*(A^\perp +x) = r^*(A^\perp) +1 &\Leftrightarrow &
   \dim(A^\perp +x) + r(A \cap x^\perp) = \dim(A^\perp)+r(A)+1  \\
   & \Leftrightarrow & r(A\cap x^\perp) = r(A).
\end{eqnarray*}
In particular, since every hyperplane in $A$ has the form $A \cap x^\perp$ for $x \nleq A^\perp$, it follows that $A^\perp$ is a flat of $M^*$ if and only if $A$ is a cyclic space of $M$.\\
\textcolor{black}{{$(3) \Leftrightarrow (4)$}:
This follows directly from Lemma \ref{lem:Fperpopenspace}.}
\end{proof}

 Clearly, the properties of being a cyclic, a cyclically closed, or an open space of $M$ all coincide.

The following corollary immediately follows from Proposition \ref{thm:cycequivalence}.
\begin{corollary}\label{cor:loops}
$E$ is cyclic in $M$ if and only if the dual $q$-matroid $M^\ast$ does not contain a loop. In particular, $A\in\mL(E)$ is cyclic if and only if $(M|_A)^\ast$ does not contain a loop.
\end{corollary}

\begin{lemma}
   Let $A \in\mL(E)$ and let $X \leq A$ such that $A = \cyc(A)\oplus X$.
   Then $X$ is independent.
\end{lemma}

\begin{proof}
    Suppose that $X$ is not independent. Then $X$ contains a circuit $C$. Since $C$ is cyclic in $M$, by Lemma \ref{lem:sumCicyclic} $\cyc(A)+C$ is cyclic. Since $\cyc(A)$ is the unique maximal cyclic subspace of $A$ it follows that $C \leq \cyc(A)$, which yields a contradiction.
\end{proof}

The following result which will be crucial for establishing a $q$-cryptomorphism based on cyclic flats.

\begin{lemma}\label{lem:rA-rcycA}
    Let $A \in\mL(E)$. Then $r(A)-r(\cyc(A)) = \dim(A) - \dim(\cyc(A))$.
\end{lemma}
\begin{proof}
Let $\nu(A) = a$, thus $A \in N_a$. We claim that
$\cyc(A) \in N_a$.
    If $A$ is not cyclic, then there exists a subspace $Y\lneq A$, $Y\in N_a$ such that $Y$ is cyclic, by Proposition \ref{thm:cycequivalence}. From Theorem \ref{th:cyc123}, it follows that $Y=\cyc(Y)\leq \cyc(A)$ and so $a\leq \nu(\cyc(A))\leq a$, which implies that $\cyc(A)\in N_a$.
\end{proof}

\begin{proposition}\label{prop:Cyccyc}
   Let $A \in\mL(E)$. Then $\displaystyle \cyc(A)=\sum_{x \in \Cyc(A)} x$.
\end{proposition}

\begin{proof}
    The statement clearly holds if $A$ is cyclic, so assume that $\cyc(A) \lneq A$.
    Let $x \in \Cyc(A)$ such that $x \nleq \cyc(A)$.
    Let $H \in \Hyp(A)$ such that $\cyc(A) \leq H$ and $x \nleq H$.
    By the definition of $\Cyc(A)$, we have that $r(x+H)=r(H)$.
    By Lemma \ref{lem:rA-rcycA}, $A/\cyc(A)$ is independent in $M/\cyc(A)$.
    Therefore, $r(V)-r(\cyc(A)) = \dim(V)-\dim(\cyc(A))$ for every subspace $V$ satisfying
    $\cyc(A) \leq V \leq A$ and so we have that
    \[
        r(x+H)-r(H) = r(x+H)-r(\cyc(A))-r(H)+r(\cyc(A))=\dim(x+H)-\dim(H)=1,
    \]
    which yields a contradiction. It follows that $\Cyc(A) \subseteq \mP(\cyc(A)).$
    Conversely, since $\cyc(A)$ is cyclic, by Proposition \ref{prop:cyc}
    for any $x \leq \cyc(A)$ and any $H \in \Hyp(\cyc(A))$ we have that
    $r(x+H)=r(H)$ and so $r(x+H')=r(H')$ for any $H' \in \Hyp(A)$. It follows that $\mP(\cyc(A)) \subseteq \Cyc(A)$ and $\Cyc(\cyc(A))=\Cyc(A)$. The result now follows from Proposition \ref{prop:cyc}.
\end{proof}

The collection of cyclic spaces of $M$ forms a lattice, such that for every pair of cyclic spaces $C_1, C_2$, the join is defined by $C_1\vee C_2:=C_1+C_2$ and the meet is defined by $C_1\wedge C_2:=\cyc(C_1\cap C_2)$. Indeed,  by Lemma \ref{lem:sumCicyclic}, the sum of two cyclic spaces $C_1,C_2$ is cyclic. However, the intersection of a pair of cyclic spaces is not  cyclic in general: for example the intersection of two circuits is independent and hence not cyclic. In Example \ref{ex:first}, we provide a specific counterexample.

\begin{example}\label{ex:first}
Consider $\F_8=\F_{2^3}$ and let $\alpha\in\F_8$ be a primitive element satisfying $\alpha^3=\alpha+1$. Let $G$ be the following matrix with entries in $\F_8$,
$$ G:= \begin{pmatrix}
1 & \alpha & 1 & \alpha^2 & \alpha^4 \\
\alpha^3 & \alpha^4 & \alpha^4 & 1 & 1\\
\end{pmatrix}\in\F_8^{2\times 5}.$$
Let ${M}[G]=(\F_2^5,r)$ be the $q$-matroid associated to $G$.
With the aid of the computer algebra system \textsc{Magma} \cite{MR1484478}, we have checked that $M[G]$ contains $102$ cyclic spaces. Among these, consider for instance $U=\langle e_2, e_3, e_4, e_5 \rangle$ and $V=\langle e_1+e_4, e_2+e_5, e_3+e_5 \rangle$. We have that $V\cap U=\langle e_2+e_5, e_3+e_5 \rangle$  is independent and hence cannot be cyclic by Remark \ref{lem:cyc_dep}.

\end{example}

\subsection{Cyclic Flats}

In this subsection, we focus on \emph{cyclic flats}, which are simultaneously cyclic spaces and flats, i.e. spaces that are both open and closed in the $q$-matroid $M$. We show that also the collection of cyclic flats of a $q$-matroid forms a lattice and we prove that this lattice, together with the rank values of the cyclic flats, uniquely determines the $q$-matroid.
\begin{definition}
$F\in\mL(E)$ is a {\bf cyclic flat} if $\cyc(F)=F$ and $\cl(F)=F$. In terms of the rank function, a cyclic flat $F$ satisfies the following two properties:
    \begin{enumerate}
    \item $r(F+x)>r(F)$ for any $x\in\mL(E),$ such that $x\not\leq F$.
    \item $r(D)=r(F)$ for any $D \in\Hyp(F)$.
\end{enumerate}
We write $\mZ_r$ to denote the collection of cyclic flats of $M$. If it is clear from the context, we will simply write $\mZ$.
\end{definition}

The cyclic operator and the closure operator are closely related. Their interaction is also expressed by the following preliminary results.

\begin{lemma}\label{lem:cl(C)}
Let  $X\in\mL(E)$ be cyclically closed.
Then $\cl(X)\in \mZ$.
\end{lemma}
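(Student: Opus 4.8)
The plan is to show that $\cl(X)$ is simultaneously a flat and a cyclic space, given that $X$ is cyclically closed (equivalently, cyclic). That $\cl(X)$ is a flat is immediate: the closure operator of a $q$-matroid is idempotent, so $\cl(\cl(X)) = \cl(X)$, and a subspace equal to its own closure is a flat. Hence the only real content is to prove that $\cl(X)$ is cyclic, i.e. $\cyc(\cl(X)) = \cl(X)$.

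The key observation is that $r(\cl(X)) = r(X)$ — adjoining to $X$ all the $1$-dimensional subspaces that do not increase the rank does not change the rank (this is a standard property of the closure function, and follows by repeatedly applying Lemma \ref{lem:r(B+x)} to see that each $x \in \Cl_r(X)$ satisfies $r(X + x) = r(X)$, and then that finite sums of such do not raise the rank). Now let $D \leq \cl(X)$ with $\codim_{\cl(X)}(D) = 1$; I want $r(D) = r(\cl(X)) = r(X)$. Since $\dim \cl(X) \geq \dim X$ and $X$ is cyclic (hence dependent, so $\dim X > r(X)$), one should compare $D$ with $X$. The cleanest route is via nullity and Proposition \ref{lem:rA-rcycA} / Theorem \ref{thm:cycequivalence}: since $r(\cl(X)) = r(X)$, we have $n(\cl(X)) = \dim(\cl(X)) - r(X) \geq \dim(X) - r(X) = n(X)$, and in fact the containment $X \leq \cl(X)$ with equal rank forces, by (n2), that $X$ and $\cl(X)$ lie in the same $N_a$ only if $\dim X = \dim \cl(X)$ — which need not hold. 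So instead I argue directly: $\cyc(\cl(X))$ is a cyclic space contained in $\cl(X)$, and by Corollary \ref{cor:sumcircuits} it is the sum of all circuits contained in $\cl(X)$; every such circuit $C$ is cyclic, hence dependent. I claim every circuit $C \leq \cl(X)$ is already contained in $X$. Indeed, writing $\cl(X) = X \oplus Y$ with $Y$ a complement, the Lemma just before this one (that if $A = \cyc(A) \oplus X'$ then $X'$ is independent, and every open space in $A$ lies in $\cyc(A)$) applies with $A = \cl(X)$: since $X$ is cyclically closed and $r(\cl(X)) = r(X)$, one checks $\cyc(\cl(X)) = X$. Concretely, any circuit $C \leq \cl(X)$ is an open space, and since $r(X+C) = r(X)$ (as $C \leq \cl(X)$ and closure does not raise rank), Lemma \ref{lem:sumCicyclic} gives that $X + C$ is cyclic with $\cyc(X+C) = X+C$; but then $C \leq X + C = \cyc(X+C)$, and I must deduce $C \leq X$. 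This last step uses that $\cyc(X+C) \leq X$: since $n(X+C) = \dim(X+C) - r(X)$ and the minimal cyclic subspace in $N_{n(X)}$ contained in $X+C$ has the same rank $r(X)$; as $X$ itself is cyclic with $r(X) = r(X+C)$, by Theorem \ref{thm:cycequivalence} $X$ is minimal in $N_{n(X)}$ and $X + C \in N_{n(X+C)}$ with $n(X+C) = \dim(X+C) - r(X) \geq n(X)$ forces $\cyc(X+C) = X$ by Corollary after Lemma \ref{lem:sumCicyclic} applied to $\cyc(X+C) \leq X \leq X+C$. Therefore $C \leq X$, so $\cyc(\cl(X)) = \sum_{C \leq \cl(X)} C = \sum_{C \leq X} C = \cyc(X) = X$.

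Wait — this gives $\cyc(\cl(X)) = X$, not $\cl(X)$, so $\cl(X)$ is cyclic only if $X = \cl(X)$, which is false in general. So the nullity-preservation claim must be re-examined: the correct statement is that $\cl(X)$ is cyclic because for each codimension-$1$ subspace $D \leq \cl(X)$ we need $r(D) = r(\cl(X))$, and this should be derived from the closure property together with the cyclicity of $X$ via Remark \ref{rem:closure-cyc}. The right argument: by Remark \ref{rem:closure-cyc}, $X$ cyclically closed means $\cl(X) = \cl(B)$ for every $B \leq X$ of codimension $1$ in $X$; I want the analogous statement for $\cl(X)$, namely $\cl(\cl(X)) = \cl(D)$ for every codimension-$1$ $D \leq \cl(X)$, which would give that $\cl(X)$ is cyclically closed by the same Remark. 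Since $\cl(\cl(X)) = \cl(X)$, this reduces to: $\cl(D) = \cl(X)$ for every $D \leq \cl(X)$ with $\codim_{\cl(X)}(D) = 1$. This is where the main work lies, and it is the step I expect to be the principal obstacle: one must show that removing a codimension-$1$ piece from a flat $\cl(X)$ whose "core" $X$ is cyclic does not drop the closure. I would handle it by intersecting with $X$: by Lemma \ref{lem:sub-codim1}-type reasoning, $D \cap X$ is either $X$ or has codimension $1$ in $X$; in the first case $X \leq D$ so $\cl(D) \geq \cl(X)$, done; in the second case $\cl(D) \geq \cl(D \cap X) = \cl(X)$ (using $X$ cyclically closed, via Remark \ref{rem:closure-cyc}), and since $\cl(D) \leq \cl(\cl(X)) = \cl(X)$ by monotonicity, equality holds. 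This completes the proof that $\cl(X)$ is cyclically closed; being also closed, $\cl(X) \in \mZ$.

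In summary, the skeleton is: (i) $\cl(X)$ is a flat by idempotency of $\cl$; (ii) for any codimension-$1$ subspace $D$ of $\cl(X)$, intersect with $X$, split into the two cases $D \cap X = X$ and $\codim_X(D \cap X) = 1$, and use monotonicity of $\cl$ together with the Remark \ref{rem:closure-cyc} characterization of the cyclically closed space $X$ to conclude $\cl(D) = \cl(X)$; (iii) invoke Remark \ref{rem:closure-cyc} once more to conclude $\cl(X)$ is cyclically closed, hence $\cl(X) \in \mZ$. The delicate point throughout is keeping straight that $\cl(X)$ may strictly contain $X$ and yet inherit cyclicity, so the argument must go through closures of codimension-$1$ subspaces rather than through ranks or nullities directly.
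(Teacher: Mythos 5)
Your final argument (the skeleton (i)--(iii) in your summary) is correct and is essentially the paper's own proof: the same case split on whether $X\leq D$, with the case $X\nleq D$ handled by noting that $D\cap X$ has codimension one in $X$ and invoking the cyclicity of $X$ --- you phrase the conclusion as $\cl(D)=\cl(X)$ via Remark \ref{rem:closure-cyc}, while the paper equivalently concludes $r(D)=r(\cl(X))$. The long first paragraph, culminating in the claim $\cyc(\cl(X))=X$, is a false start that you yourself correctly retract and should simply be deleted.
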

\begin{proof}
Since $\cl(X)$ is a flat, we need only to show that it is cyclic. Assume that $V\in\Hyp(\cl(X))$. If $X< V< \cl(X)$, then $\cl(X) = \cl(V)$ and in particular $r(V)=r(\cl(X))$. On the other hand, if $X$ is not contained in $V$, then $X\cap V\in\Hyp(X)$ and, since $X$ is cyclically closed, $r(X\cap V)=r(X)$. This shows that $r(X)=r(X\cap V)\leq r(V)$, which implies that $r(\cl(X))=r(V)$, hence $\cl(X)$ is cyclic.
\end{proof}

\begin{lemma}\label{lem:cyc(C)}
  Let $F\in\mL(E)$ be a flat of $M$.
  Then $\cyc(F) \in \mZ$.
\end{lemma}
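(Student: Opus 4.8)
The plan is to note that $\cyc(F)$ is automatically cyclically closed, hence cyclic, by (cyc3) of Theorem~\ref{th:cyc123}, so the only thing left to verify is that $\cyc(F)$ is also a flat. Concretely, I would show that for every $1$-dimensional subspace $x\leq E$ with $x\nleq\cyc(F)$ one has $r(\cyc(F)+x)>r(\cyc(F))$, splitting into the cases $x\leq F$ and $x\nleq F$.

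For the case $x\leq F$: since $\cyc(F)\leq\cyc(F)+x\leq F$, the corollary to Lemma~\ref{lem:sumCicyclic} gives $\cyc(\cyc(F)+x)=\cyc(F)$. Then applying Proposition~\ref{lem:rA-rcycA} to the space $\cyc(F)+x$ forces $r(\cyc(F)+x)-r(\cyc(F))=\dim(\cyc(F)+x)-\dim(\cyc(F))=1$ (the last equality because $x\nleq\cyc(F)$), so in particular $r(\cyc(F)+x)>r(\cyc(F))$.

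For the case $x\nleq F$: I would argue by contradiction. If $r(\cyc(F)+x)=r(\cyc(F))$, then since $\cyc(F)\leq F$, Lemma~\ref{lem:r(B+x)} (with $B=\cyc(F)$, $A=F$) gives $r(F+x)=r(F)$, contradicting that $F$ is a flat with $x\nleq F$. Hence $r(\cyc(F)+x)>r(\cyc(F))$ in this case as well. Combining the two cases, $\cyc(F)$ is simultaneously cyclically closed and closed, i.e.\ $\cyc(F)\in\mZ$.

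The only step that is not completely mechanical is the case $x\nleq F$: there the clean "rank jumps by exactly $1$" computation of the first case is unavailable, and one must instead transport the strict rank increase from $F$ back down to $\cyc(F)$, which is precisely what Lemma~\ref{lem:r(B+x)} supplies. Everything else is bookkeeping with results already established.
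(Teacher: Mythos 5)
Your proof is correct, but it takes a genuinely different route from the paper's. The paper shows that $\cyc(F)$ equals the intersection $\bigcap_{A}\cl(A)$ taken over all codimension-$1$ subspaces $A$ of $\cyc(F)$ (using the observation of Remark \ref{rem:closure-cyc} together with the monotonicity $\cl(A)\leq\cl(F)=F$), and then concludes because an intersection of flats is a flat. You instead verify the rank-increase condition of Definition \ref{def:flat} directly, splitting on whether $x\leq F$: inside $F$ you combine the corollary to Lemma \ref{lem:sumCicyclic} with Proposition \ref{lem:rA-rcycA} to get the exact jump $r(\cyc(F)+x)=r(\cyc(F))+1$, and outside $F$ you transport the strict increase from $F$ down to $\cyc(F)$ via the contrapositive of Lemma \ref{lem:r(B+x)}. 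Both arguments are sound; yours relies only on statements already established earlier in the paper (whereas the paper's leans on the closed-under-intersection property of flats, imported from the literature on $q$-matroids), and it isolates cleanly where the hypothesis that $F$ is a flat actually enters, namely only in the case $x\nleq F$. The paper's version is shorter and gives the additional structural fact that $\cyc(F)$ is an intersection of closures of its hyperplanes.
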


\begin{proof}
    We need to show that for every flat $F$, $\cyc(F)$ is also a flat. Let $H=\Hyp(F)$.
    For any $A\in H$ and any $x \leq \cyc(F)$
    we have $r(x+A)=r(A)$, i.e. $x \leq \cl(A)$. It follows that $\cyc(F) \subseteq \bigcap_{A \in H} \cl(A).$
    On the other hand, if $x \leq \bigcap_{A \in H} \cl(A)$ then
    $x \leq F$ such that $r(x+A)=r(A)$ for every hyperplane $A$ in $F$. 
    It follows that $\cyc(F) = \bigcap_{A \in H} \cl(A)$, and in particular is an intersection of flats of $M$. Then $\cyc(F)$ is itself a flat.
\end{proof}

It follows immediately from Lemmas \ref{lem:cl(C)} and \ref{lem:cyc(C)} that for every subspace $X \leq E$, we have
\begin{align}\label{clcyc}
     \cl(\cyc(X)) \in\mZ \text{ and }\cyc(\cl(X)) \in\mZ.
\end{align}

Moreover, the following properties hold.

\begin{lemma}
Let $\cl^\ast$ and $\cyc^\ast$ denote the closure and cyclic operators with respect to the dual
matroid $M^*=(E,r^*)$. 
For every $A\in\mL(E)$ we have that
\begin{enumerate}
    \item[(1)] $\cyc^\ast(A)^\perp = \cl(A^\perp)$ and $\cyc(A)^\perp = \cl^\ast(A^\perp)$.
    \item[(2)] $\cl(\cyc(A))\cap A =\cyc(A)$.
    \item[(3)] $\cyc(\cl(A))+A = \cl(A)$.
\end{enumerate}
\end{lemma}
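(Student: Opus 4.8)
The plan is to prove the three identities in turn, obtaining item~(3) from item~(2) by dualising through item~(1). For item~(1), the idea is to recognise both sides as extremal elements of the same sublattice of $\mathcal{L}(E)$ under the order-reversing bijection $B\mapsto B^\perp$. On one side, $\cyc^\ast(A)$ is the unique largest cyclic subspace of $A$ in $M^\ast$: it is cyclic by property (cyc3) and contained in $A$ by (cyc1), while every cyclic --- equivalently, by Corollary~\ref{cor:sumcircuits}, every open --- subspace of $A$ is contained in it, by the lemma stating that every open space in $A$ lies inside $\cyc(A)$ (applied in $M^\ast$). On the other side, $\cl(A^\perp)$ is the unique smallest flat of $M$ containing $A^\perp$: the inclusion $A^\perp\leq\cl(A^\perp)$ is immediate from Definition~\ref{def-closure} (for a $1$-dimensional $x\leq A^\perp$ one has $r(A^\perp+x)=r(A^\perp)$), $\cl(A^\perp)$ is a flat exactly as in the proof of Lemma~\ref{lem:cl(C)}, and minimality holds because $\cl$ is monotone --- which is precisely Lemma~\ref{lem:r(B+x)} --- so $\cl(A^\perp)\leq\cl(F)=F$ for every flat $F$ with $A^\perp\leq F$. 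By Theorem~\ref{thm:cycequivalence} applied to $M^\ast$ (using $M^{\ast\ast}=M$), the map $B\mapsto B^\perp$ carries $\{B\leq A : B\text{ cyclic in }M^\ast\}$ bijectively onto $\{F : F\text{ a flat of }M,\ A^\perp\leq F\}$ and reverses inclusion; hence it sends the largest member of the first set to the smallest member of the second, which gives $\cyc^\ast(A)^\perp=\cl(A^\perp)$. Applying the same identity with $M^\ast$ in place of $M$ (and $A^\perp$ in place of $A$) yields $\cyc(A)^\perp=\cl^\ast(A^\perp)$.

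For item~(2), only the inclusion $\cl(\cyc(A))\cap A\leq\cyc(A)$ needs proof, the reverse inclusion being immediate from $\cyc(A)\leq A$ and $\cyc(A)\leq\cl(\cyc(A))$. The crux is the following claim: if $C\leq E$ is cyclic and $y$ is a $1$-dimensional subspace with $r(C+y)=r(C)$, then $C+y$ is cyclic. The argument is that we may assume $y\nleq C$, so that every subspace $D$ of codimension $1$ in $C+y$ has $\dim D=\dim C$; either $D=C$, whence $r(D)=r(C)$, or $D\neq C$, in which case $C\cap D$ has codimension $1$ in $C$, so $r(D)\geq r(C\cap D)=r(C)$ by cyclicity of $C$ while $r(D)\leq r(C+y)=r(C)$, and in all cases $r(D)=r(C+y)$, so $C+y$ is cyclic. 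Granting the claim, let $y\leq\cl(\cyc(A))\cap A$ be $1$-dimensional. Iterating Lemma~\ref{lem:r(B+x)} shows that $r(\cl(B))=r(B)$ for all $B\leq E$, hence $r(\cyc(A)+y)=r(\cyc(A))$; by the claim $\cyc(A)+y$ is cyclic and is contained in $A$, so the maximality of $\cyc(A)$ among the cyclic subspaces of $A$ forces $y\leq\cyc(A)$. As $y$ was arbitrary, $\cl(\cyc(A))\cap A\leq\cyc(A)$, as required.

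Item~(3) then follows by dualising item~(2). By item~(1) we have $\cl(A)^\perp=\cyc^\ast(A^\perp)$ and $\cyc(\cl(A))^\perp=\cl^\ast(\cl(A)^\perp)=\cl^\ast(\cyc^\ast(A^\perp))$, so $(\cyc(\cl(A))+A)^\perp=\cyc(\cl(A))^\perp\cap A^\perp=\cl^\ast(\cyc^\ast(A^\perp))\cap A^\perp$; applying item~(2) to the $q$-matroid $M^\ast$ with $A^\perp$ in the role of $A$ shows that this equals $\cyc^\ast(A^\perp)=\cl(A)^\perp$, and taking orthogonal complements gives $\cyc(\cl(A))+A=\cl(A)$. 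The only genuinely combinatorial step is the claim inside item~(2); everything else is orthogonality bookkeeping together with the standard facts that $\cl$ is monotone and rank-preserving and that flats are precisely the fixed points of $\cl$. Accordingly I expect the main care to be needed in that claim --- in particular in treating an arbitrary codimension-$1$ subspace $D$ of $C+y$ rather than only those that contain or avoid $y$ --- and in keeping track of which $q$-matroid each of $\cyc,\cl,\cyc^\ast,\cl^\ast$ refers to when invoking item~(1).
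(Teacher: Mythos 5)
Your proof is correct, and for items (1) and (3) it follows essentially the same route as the paper: item (1) by pairing the extremality of $\cyc^\ast(A)$ (largest cyclic subspace of $A$ in $M^\ast$) against that of $\cl(A^\perp)$ (smallest flat of $M$ containing $A^\perp$) under the inclusion-reversing bijection $B\mapsto B^\perp$ of Theorem \ref{thm:cycequivalence}, and item (3) by dualising item (2) through item (1), exactly as the paper does. Where you genuinely diverge is item (2). The paper argues by contradiction: for $x\leq\cl(\cyc(A))\cap A$ with $x\nleq\cyc(A)$ it asserts both $r(x+\cyc(A))=r(\cyc(A))$ and, ``since $x\nleq\cyc(A)$,'' $r(x+\cyc(A))>r(\cyc(A))$; the second inequality would require $\cyc(A)$ to be a flat, which it need not be, so the paper's justification is at best incomplete as written. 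Your argument repairs this: you prove the clean auxiliary claim that a cyclic space $C$ extended by a line $y$ with $r(C+y)=r(C)$ remains cyclic (your case analysis on the codimension-one subspaces of $C+y$ is correct, using $r(C\cap D)=r(C)\leq r(D)\leq r(C+y)=r(C)$ when $D\neq C$), then conclude from the maximality of $\cyc(A)$ among cyclic subspaces of $A$ (the unnumbered lemma following Corollary \ref{cor:sumcircuits}) that $y\leq\cyc(A)$. This buys a fully rigorous derivation of (2) at the cost of one extra lemma; the intermediate fact $r(\cl(B))=r(B)$, which you obtain by iterating Lemma \ref{lem:r(B+x)}, is also correctly deployed to get $r(\cyc(A)+y)=r(\cyc(A))$ from $y\leq\cl(\cyc(A))$.
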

\begin{proof}
\begin{enumerate}
    \item[(1)] Since $\cyc(A)\leq A$, we have that $A^\perp \leq \cyc(A)^\perp$. From Proposition \ref{thm:cycequivalence}, we have that $\cyc(A)^\perp$ is a flat in $M^\ast$, hence
     $\cl^\ast(A^\perp) \leq \cyc(A)^\perp$.
   Moreover, as $A\leq \cl(A)$, then $\cl(A)^\perp\leq A^\perp$. Again from Proposition \ref{thm:cycequivalence}, we have that $\cl(A)^\perp$ is a cyclic space in $M^\ast$ and hence
   $\cl(A)^\perp = \cyc^\ast(\cl(A)^\perp)\leq \cyc^\ast(A^\perp)$.
   Therefore,
    \begin{align*}
        \cyc^\ast(A^\perp)^\perp \leq \cl(A) \leq \cyc^\ast(A^\perp)^\perp,
    \end{align*}
    showing that $\cyc^\ast(A)^\perp = \cl(A^\perp)$. By duality, we obtain that $\cyc(A)^\perp = \cl^\ast(A^\perp)$.

    \item[(2)] Clearly, $\cyc(A)\leq \cl(\cyc(A))\cap A  \leq \cl(\cyc(A))$.
    Therefore, $\cyc(A)$ and $\cl(\cyc(A))\cap A$ have the same rank and so $\nu(\cl(\cyc(A))\cap A)=a+s$, where $a=\nu(\cyc(A))$ and $s=\dim((\cl(\cyc(A))\cap A)/\cyc(A))$.
    Let $B=\cyc(\cl(\cyc(A))\cap A)$. By Theorem \ref{th:cyc123}, since
    $\cyc(A) \leq \cl(\cyc(A))\cap A \leq A$ then $\cyc(A) \leq B \leq \cyc(A)$ and so $B=\cyc(A)$. By Proposition \ref{thm:cycequivalence},
    $a=\nu(\cyc(A))=\nu(B)=a+s$ and hence $s=0$. The result now follows.

\item[(3)] By taking the orthogonal complements on both sides of Part (2) and applying Part (1) we get the desired result.

\end{enumerate}

\end{proof}

The next proposition shows that the collection of cyclic flats of a $q$-matroid forms a lattice under inclusion, which is not induced by the lattice of subspaces of the $q$-matroid nor the one of flats.

\begin{proposition}\label{prop:lattice}
   The set $\mZ$ of cyclic flats of a $q$-matroid forms a lattice under inclusion, where for any two cyclic flats $F_1, F_2$ the meet is defined by $F_1\wedge F_2:=\cyc(F_1\cap F_2)$ and the join is defined as $F_1\vee F_2:=\cl(F_1 + F_2)$.
\end{proposition}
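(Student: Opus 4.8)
The plan is to verify that $\mZ$ is a poset under inclusion in which any two elements have a greatest lower bound and a least upper bound, and that these are given by the stated formulas. The partial order is inherited from $\mathcal{L}(E)$, so the only content is the existence of meets and joins. The two candidate operations are well-defined elements of $\mZ$ by the results already established: $\cyc(F_1\cap F_2)\in\mZ$ because $F_1\cap F_2$ is an intersection of flats, hence a flat, and $\cyc$ of a flat lies in $\mZ$ by Lemma~\ref{lem:cyc(C)}; dually $\cl(F_1+F_2)\in\mZ$ because $F_1+F_2$ is a sum of cyclic (equivalently open) spaces, hence cyclic by Lemma~\ref{lem:sumCicyclic}, and $\cl$ of a cyclically closed space lies in $\mZ$ by Lemma~\ref{lem:cl(C)}.

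Next I would check that $\cyc(F_1\cap F_2)$ really is the meet. It is a lower bound: by (cyc1) it is contained in $F_1\cap F_2$, hence in each $F_i$. For maximality among lower bounds, suppose $G\in\mZ$ with $G\leq F_1$ and $G\leq F_2$; then $G\leq F_1\cap F_2$, and applying $\cyc$ together with (cyc2) and the fact that $G$ is cyclically closed gives $G=\cyc(G)\leq\cyc(F_1\cap F_2)$. So $\cyc(F_1\cap F_2)$ dominates every cyclic-flat lower bound and is itself one, hence it is the meet. The argument for the join is the exact order-theoretic dual, using $\cl$, property $A\leq\cl(A)$, monotonicity of $\cl$, and closedness of $G$: if $G\in\mZ$ with $F_1\leq G$ and $F_2\leq G$ then $F_1+F_2\leq G$ and $\cl(F_1+F_2)\leq\cl(G)=G$, while $F_1+F_2\leq\cl(F_1+F_2)$ shows it is an upper bound. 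One should also note a nonempty lattice needs a top and bottom, or at least that binary meets/joins suffice for a lattice in the finite setting; here $\mZ$ is nonempty (e.g. $\cl(\langle 0\rangle)$, the set of loops, is a cyclic flat — it is cyclically closed as remarked and its closure is a flat, or one can take $\cyc(E)$ and $\cl(E)$ as candidate extreme elements), and finiteness of $\mathcal{L}(E)$ then guarantees arbitrary meets and joins exist once binary ones do.

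I do not anticipate a serious obstacle: the whole proof is the standard ``Galois-connection-style'' verification that a pair of closure-like operators on a finite poset produces a lattice, and every ingredient (idempotence/monotonicity of $\cyc$ and $\cl$, the membership statements $\cyc(\text{flat})\in\mZ$ and $\cl(\text{cyclic})\in\mZ$, and cryptomorphic identification of cyclic spaces with open spaces) is already in hand. The only point requiring a little care is making sure that when $G$ is a lower bound one is entitled to write $G=\cyc(G)$ — this is exactly the definition of $G$ being cyclically closed, which membership in $\mZ$ provides — and dually $G=\cl(G)$ for upper bounds; once that is in place the inequalities chain together immediately. If one wants to be thorough, one can also remark that the meet and join so defined are genuinely different from the subspace-lattice operations $\cap,+$ and from the flat-lattice operations, as the preceding discussion and Example~\ref{ex:first} illustrate, but this is a remark rather than part of the proof that $\mZ$ is a lattice.
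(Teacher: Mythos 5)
Your proposal is correct and follows essentially the same route as the paper: both establish that $\cyc(F_1\cap F_2)$ and $\cl(F_1+F_2)$ lie in $\mZ$ via Lemma~\ref{lem:cyc(C)}, Lemma~\ref{lem:sumCicyclic} and Lemma~\ref{lem:cl(C)}, and identify $\cl(0)$ and $\cyc(E)$ as the extreme elements. The only difference is that you additionally spell out the verification that these candidates satisfy the universal properties of meet and join (via (cyc1), (cyc2), idempotence and the dual facts for $\cl$), which the paper leaves implicit.
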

\begin{proof}
 $F_1\cap F_1$  is a flat and $\cyc(F_1\cap F_2)$ is a cyclic flat by Lemma \ref{lem:cyc(C)}. If $F$ is a cyclic flat contained in $F_1 \cap F_2$ then by Theorem \ref{th:cyc123}, (cyc2), we have that $F \leq \cyc(F_1\cap F_2)$. By Lemma \ref{lem:sumCicyclic} and Lemma \ref{lem:cl(C)}, it immediately follows that  $\cl(F_1 + F_2)$ is a cyclic flat.
 If $F \in \mZ$ such that $F_1,F_2 \leq F$, then
 $F_1+F_2 \leq F$ and so $\cl(F_1 +F_2) \leq F$, by (Cl2) and (Cl3).
 Finally note that the flat $\cl(\langle 0\rangle)$ is cyclic and it is the unique minimal element of $\mZ$ while $\cyc(E)$ is the unique maximal element of $\mZ$.
\end{proof}

Combining Proposition \ref{prop:lattice} with Lemma \ref{lem:rA-rcycA}, we get that for every pair of cyclic flats $X,Y \in\mZ$,
\begin{align*}
    r(X\vee Y)&=r(\cl(X+ Y)) = r(X+ Y),\\
    r(X\wedge Y)&=r(\cyc(X\cap Y))=r (X\cap Y) - \dim((X\cap Y)/(X\wedge Y)).
\end{align*}

Brylowski outlined in \cite[Proposition 2.1]{brylawski1975affine} an algorithm for constructing the lattice of flats of a matroid from its lattice of cyclic flats, along with the ranks of the cyclic flats. In \cite[Section 5]{freij2021cyclic}, the authors also showed how to reconstruct the lattice of flats from the lattice of cyclic flats, along with their ranks.
The same construction given in \cite{freij2021cyclic} applies in the $q$-analogue and we give a brief sketch. For each $X\in\mL(E)$, define two cyclic flats $$X^\vee:=\bigvee_{\substack{Z:Z \in {\mathcal Z} ,\\ Z \leq X}} Z = \cl\left(\sum_{\substack{Z:Z \in {\mathcal Z} ,\\ Z \leq X}} Z\right) \leq \cl(X)  \ \  \text{ and } \ \  X^\wedge:=\bigwedge _{\substack{Z:Z \in {\mathcal Z} ,\\ X \leq Z}} Z =\cyc\left(\bigcap_{\substack{Z:Z \in {\mathcal Z} ,\\ X \leq Z}} Z\right),$$
where $\vee$ and $\wedge$ denote the join and meet, respectively of the lattice $\mZ$ and where the intersection of an empty set of spaces is equal to the whole space $E$.

The following Lemma for matroids can be read in \cite{freij2021cyclic}. We include a proof of the $q$-analogue for the convenience of the reader.

\begin{lemma}\label{lem:lattice}
Let $A\in\mL(E)$ and $Z\in\mL(E)$ be a cyclic flat of the $q$-matroid $M$ satisfying $r(Z) + \dim(A/(A\cap Z)) = r(A)$. Then $\cl(\cyc(A))\leq Z \leq \cyc(\cl(A))$.
\begin{proof}
For every $A,Z\in\mL(E)$ we have that
$$r(A)-r(A\cap Z)=r_{M/(A\cap Z)}(A/(A\cap Z)) \leq \dim(A/(A\cap Z)).$$
Hence
$$ r(A)\leq r(A\cap Z) + \dim(A/(A\cap Z))\leq r(Z)+\dim(A/(A\cap Z)).$$
\emph{Claim 1:} $r(A)= r(A\cap Z) + \dim(A/(A\cap Z))$ if and only if $\cyc(A)\leq A\cap Z$.
Indeed, having the equality means that $\nu(\cyc(A\cap Z)=\nu(A\cap Z) = \nu(A)=\nu(\cyc(A))$, where the first and last equalities follow from Lemma \ref{lem:rA-rcycA}. Since $\cyc(A)$ and $\cyc(A\cap Z)$ are minimal with respect to inclusion in $N_{\nu(A)}$ and $\cyc(A\cap Z)\leq \cyc(A)$, we must have that $\cyc(A)= \cyc(A\cap Z)\leq A\cap Z$. Conversely, if $\cyc(A)\leq A\cap Z$, then we have that $\cyc(A)=\cyc(A\cap Z)$, hence, by Lemma \ref{lem:rA-rcycA}, we have that $\nu(A)=\nu(A\cap Z)$, from which the claim follows.

\noindent\emph{Claim 2:} $r(A\cap Z) + \dim(A/(A\cap Z))= r(Z)+\dim(A/(A\cap Z))$ only if $Z\leq \cl(A)$. Assume that $Z\not\leq \cl(A)$. Let $z\leq Z$, such that $z\not\leq\cl(A)$. Then $r(A)<r(A+z)$ and by submodularity we have that
$$ r(A\cap Z) < r((A\cap Z)+z)\leq r(Z).$$
The two claims show that for any $A,Z$ satisfying $r(Z) + \dim(A/(A\cap Z)) = r(A)$, we must have $\cyc(A)\leq Z\leq \cl(A)$. So, $\cl(\cyc(A))\leq\cl(Z)$ and $\cyc(Z)\leq\cyc(\cl(A))$. If $Z$ is a cyclic flat, then $Z = \cl(Z) = \cyc(Z)$, and it follows that $\cl(\cyc(A))\leq Z \leq \cyc(\cl(A))$.
\end{proof}
\end{lemma}

Clearly, if $X$ is a cyclic flat then $X^\vee =X$. It can be shown, using an argument identical to that of \cite[Proposition 6, (i) $\Leftrightarrow$ (iii)]{freij2021cyclic}, which depends on Lemma \ref{lem:lattice}, that
if $X^\vee \leq X$, then $X$ is a flat if and only if for every cyclic flat $A$ satisfying  $X^\vee \lneq A \leq X^\wedge$,
\begin{align}\label{eq:checkflats}
    \dim(X \cap A)-r(A)<\nu(X^\vee).
\end{align}
This property can be checked if $\mZ$ and the ranks of its elements are known. Note that if this is the case, that is if $X$ is a flat, then $X^\vee = \cyc(X)$.

\textcolor{black}{
Hence, we draw the following conclusion.
\begin{proposition}\label{prop:reconstruction}
Every $q$-matroid $M$ is uniquely determined by its lattice of cyclic flats along with their rank values.
\end{proposition}
\begin{proof}
    By the algorithm outlined above, the lattice of cyclic flats along with their ranks uniquely determines the lattice of flats of $M$. By \cite{byrne2022constructions}, $M$ is uniquely determined by its lattice of flats.
\end{proof}
}
The next example illustrates how the reconstruction algorithm works.

\begin{example}\label{ex:2x4}
Consider the finite field $\F_{2^3}=\F_2[\alpha]$, where $\alpha^3=\alpha+1$. Let  $G\in\F_{2^3}^{2\times 4}$ be the  matrix
$$ G =\begin{pmatrix}
1 & 0 & 0 & 0\\
0 & 1 & \alpha & \alpha^2
\end{pmatrix}.$$

\noindent
Let $M[G]=(\F_2^4,r)$ be the $q$-matroid associated to $G$.
The only cyclic flat except for $\langle 0\rangle$ is $\langle e_2, e_3, e_4\rangle $; see Figure \ref{fig:firstlattice}. We have
{$\cyc(E) = \langle e_2,e_3,e_4  \rangle$}, which means that $(M[G])^\ast$ has a loop, by Corollary \ref{cor:loops}.

\begin{figure}[ht!]
\centering
\resizebox{0.14\textwidth}{!}{%
\begin{tikzpicture}
\node[shape=rectangle, draw=black] (00) at (0,0) {\tiny $\langle 0 \rangle $};
 \node[shape=rectangle,draw=black, fill=orange!30] (20) at (0,1) {\tiny $\langle e_2,e_3,e_4 \rangle$};
 \path [- ] (00) edge (20);
\end{tikzpicture}
}
\caption{Lattice of cyclic flats of the matroid ${M}[G]$ from Example \ref{ex:2x4}.}
\label{fig:firstlattice}
\end{figure}
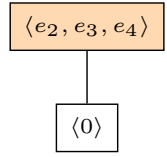

\noindent
We may apply the reconstruction result from the above discussion to obtain all the flats of $M[G]$. Take for instance the space $F=\langle e_1\rangle$. Then $F^\vee = \langle 0\rangle\leq F$ and $F^\wedge = \cyc(E) =\langle e_2,e_3, e_4\rangle $. The only cyclic flat $A$, satisfying $F^\vee \lneq A \leq F^\wedge$ is $\cyc(E)$. We can then verify that \eqref{eq:checkflats} is satisfied, i.e.
$$ 0-1 = \dim(\langle e_1\rangle \cap \langle e_2,e_3,e_4\rangle) - r(\langle e_2,e_3,e_4\rangle) < \nu(\langle 0\rangle )=0.$$
Hence, we conclude that $F=\langle e_1 \rangle$ is a flat and $\langle 0\rangle=F^\vee=\cyc(F)$. Moreover, by applying Lemma \ref{lem:rA-rcycA}, we have that $r(F)=1$.

\noindent
For another example, let $B=\langle e_2,e_3\rangle$. Then, as above, $B^\vee = \langle 0\rangle $ and $B^\wedge = \cyc(E) =\langle e_2,e_3, e_4\rangle$. The only cyclic flat $A$, satisfying $B^\vee \lneq A \leq B^\wedge$ is $\cyc(E)$. However, this time we have that
$$1=2-1 =  \dim(\langle e_2,e_3 \rangle \cap \langle e_2,e_3,e_4\rangle) - r(\langle e_2,e_3,e_4\rangle) > \nu(\langle 0\rangle)=0,$$
hence, $B$ is not a flat. With this procedure we can reconstruct the flats of $M[G]$.  We list them below, together with their ranks.
\begin{align*}
    \textnormal{rank } r=0: \quad & \langle 0\rangle . \\
    \textnormal{rank } r=1:  \quad  & \langle e_1\rangle, \langle e_1 + e_2\rangle,\langle e_1 + e_3\rangle, \langle e_1 + e_4\rangle, \langle e_1 + e_2 +e_3\rangle,\\
    \quad &\langle e_1 + e_2 +e_4\rangle, \langle e_1 + e_3 +e_4\rangle,\langle e_1+e_2+e_3 +e_4\rangle,\langle e_2,e_3,e_4\rangle. \\
     \textnormal{rank } r=2: \quad  & E.
\end{align*}

\end{example}

We conclude this section by providing a characterization in terms of cyclic flats of a well-known family of $q$-matroids, namely the family of \emph{uniform $q$-matroids} (c.f \cite{jurrius2018defining}). To this end, we denote by $0_{\mZ}:=\cl(\langle 0\rangle)$ the minimal element and by $1_{\mZ}:=\cyc(E)$ the maximal element of the lattice of cyclic flats of $M$.

The following result characterizes the independent spaces and the circuits of a $q$-matroid in terms of its cyclic flats.

\begin{lemma}\label{lem:ind-circuit-cflats}
Let $M=(E,r)$ be a $q$-matroid, then the following hold.
\begin{enumerate}
    \item $I\in\mL(E)$ is independent if and only if for every cyclic flat $X$, $\dim(I\cap X) \leq r(X)$.
    \item $C\in\mL(E)$ is a circuit if and only if $C$ is a minimal space such that there exists a cyclic flat $X$ satisfying $C\leq X$ and $\dim(C)=r(X)+1$.
\end{enumerate}
\end{lemma}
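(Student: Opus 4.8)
The plan is to derive both statements from Corollary \ref{cor:sumcircuits} and the characterization of cyclic flats, using the fact that every space has a well-defined \emph{cyclic core} $\cyc(A)$ and that $\cyc$ maps into the cyclic spaces (equivalently, open spaces). For part (1), one direction is immediate: if $I$ is independent, then $r(I)=\dim(I)$, and for any cyclic flat $X$ we have $\dim(I\cap X)=r(I\cap X)$ (a subspace of an independent space is independent) $\leq r(X)$ by (R2). For the converse, I would argue contrapositively: suppose $I$ is dependent, so $I$ contains a circuit $C$. Set $X:=\cl(\cyc(I))$, which is a cyclic flat by the remark following Lemma \ref{lem:cyc(C)}. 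The key claim is that $\dim(I\cap X)>r(X)$. Since $C$ is cyclic and $C\leq I$, by Theorem \ref{th:cyc123} we have $C=\cyc(C)\leq\cyc(I)\leq X$, so $C\leq I\cap X$; more strongly I want to show $\cyc(I)\leq I\cap X$ and then compare dimensions and ranks. By Proposition \ref{lem:rA-rcycA}, $r(I)-r(\cyc(I))=\dim(I)-\dim(\cyc(I))$, i.e. $n(I)=n(\cyc(I))$, and since $I$ is dependent this common nullity is positive. Now $\cyc(I)\leq I\cap X$, so $\dim(I\cap X)\geq\dim(\cyc(I))$, while $r(X)=r(\cl(\cyc(I)))=r(\cyc(I))=\dim(\cyc(I))-n(\cyc(I))<\dim(\cyc(I))\leq\dim(I\cap X)$. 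This exhibits a cyclic flat violating the inequality, completing the contrapositive.

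For part (2), I would first show that if $C$ is a circuit then $C$ has the stated minimality property. Take $X:=\cl(C)$; since $C$ is cyclic (it is a circuit, hence an open space by Corollary \ref{cor:sumcircuits}), $X$ is a cyclic flat by Lemma \ref{lem:cl(C)}, and $r(X)=r(\cl(C))=r(C)=\dim(C)-1$, so $\dim(C)=r(X)+1$. Minimality: if $C'\lneq C$ also satisfied $\dim(C')=r(Y)+1$ for some cyclic flat $Y$, then by part (1) applied to the proper subspace $C'$ of the circuit $C$ (which is independent), $\dim(C'\cap Y)\leq r(Y)$ for every cyclic flat $Y$; taking $Y$ with $\dim(C')=r(Y)+1$ and noting we would need $C'\leq Y$ to get $\dim(C'\cap Y)=\dim(C')=r(Y)+1>r(Y)$, a contradiction — but one must be slightly careful about whether $C'\leq Y$ is forced. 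I would instead phrase minimality directly: among all spaces $D$ for which some cyclic flat $X$ has $\dim(D)=r(X)+1$, the circuits are exactly the minimal ones. Conversely, suppose $D$ is minimal with $\dim(D)=r(X)+1$ for some cyclic flat $X$. Then $\dim(D)>r(X)\geq r(D\cap X)\geq r(D)-(\dim(D)-\dim(D\cap X))$; if $D\leq X$ this gives $r(D)<\dim(D)$, so $D$ is dependent and contains a circuit $C$, and by the first half $\dim(C)=r(\cl(C))+1$, so minimality of $D$ forces $D=C$. The remaining point is to reduce the general case to $D\leq X$: replace $X$ by $\cl(\cyc(D))$ as in part (1) to ensure the cyclic flat sits "around" $D$ appropriately, or observe that if $D\not\leq X$ we can pass to $D\cap X$ and the witness still has one more dimension than its rank, contradicting minimality unless things collapse.

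The main obstacle I anticipate is the \emph{converse of part (2)} — precisely the bookkeeping needed to guarantee that the witnessing cyclic flat $X$ can be taken to contain $D$ (or that $D\cap X$ still witnesses the dimension-rank gap), so that "$\dim(D)=r(X)+1$" genuinely forces $D$ to be dependent. The clean way around this is the substitution $X\mapsto\cl(\cyc(D))$: this is always a cyclic flat, always contains $\cyc(D)$, and by Proposition \ref{lem:rA-rcycA} controls $r$ versus $\dim$ on $D$; I expect that verifying $\dim(D)-r(\cl(\cyc(D)))$ equals exactly $n(D)=n(\cyc(D))$ (hence is $1$ precisely when $D$ is a circuit) is the real content, and everything else is formal manipulation with (R1)–(R3), Theorem \ref{th:cyc123}, and Corollary \ref{cor:sumcircuits}.
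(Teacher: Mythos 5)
Your part (1) is correct and reaches the paper's conclusion by a slightly different route: the paper picks a circuit $C\leq I$ and takes the witness $X=\cl(C)$, showing $\cl(I\cap X)=X$ and hence $r(X)=r(I\cap X)<\dim(I\cap X)$, whereas you take $X=\cl(\cyc(I))$ and use Proposition \ref{lem:rA-rcycA} to get $r(X)=r(\cyc(I))=\dim(\cyc(I))-n(I)<\dim(\cyc(I))\leq \dim(I\cap X)$. Both arguments are valid; yours avoids having to exhibit a circuit inside $I$ at the cost of invoking the nullity identity. The forward implications of both parts also match the paper ($X=\cl(C)$ with $r(\cl(C))=r(C)=\dim(C)-1$).

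The genuine gap is in the converse of part (2), precisely where you flagged it, and your proposed patch does not close it. The substitution $X\mapsto \cl(\cyc(D))$ gives, by Proposition \ref{lem:rA-rcycA}, $\dim(D)-r(\cl(\cyc(D)))=\dim(D)-r(\cyc(D))=(\dim(D)-\dim(\cyc(D)))+n(D)$, which equals $n(D)$ only when $D$ is already cyclic; for a non-loop one-dimensional $D$ one has $\cyc(D)=0$, so this quantity is $1$ while $n(D)=0$, and the substitution cannot certify dependence. In fact, under the literal reading of the statement (no containment required between $C$ and $X$), every one-dimensional space $C$ satisfies $\dim(C)=r(\cl(0))+1$ with the cyclic flat $X=\cl(0)=0_{\mZ}$, so the ``minimal'' spaces would all be one-dimensional and the claim would be false for any $q$-matroid whose circuits have dimension at least $2$ (e.g.\ $U_{k,n}$ with $k\geq 1$). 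The statement only holds if the minimality ranges over pairs with $C\leq X$ (equivalently, by part (1), over the dependent spaces), and under that reading your ``if $D\leq X$'' branch is the entire argument and does close the proof: $D\leq X$ and $\dim(D)=r(X)+1$ give $\dim(D\cap X)>r(X)$, so $D$ is dependent by part (1), contains a circuit, and equals it by minimality; conversely a proper subspace $C'$ of a circuit is independent, so part (1) forbids $\dim(C'\cap Y)>r(Y)$ and hence forbids $C'\leq Y$ with $\dim(C')=r(Y)+1$. For comparison, the paper's own proof of part (2) is a two-line sketch that does not address this containment issue either, so your diagnosis of where the real content lies is accurate even though your proposed resolution is not.
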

\begin{proof}
\begin{enumerate}
    \item  $I$ is independent if and only if $r(I)=\dim(I)$, in which case every subspace of $I$ is independent. In particular, if $I$ is independent then $r(I\cap X) =\dim(I\cap X)$ for every cyclic flat $X$. Then $\dim(I\cap X)\leq r(X)$ by (R2).
    Assume now that $I$ is not independent. We will construct a \textcolor{black}{cyclic flat} $X$ such that $r(X)<\dim(I \cap X)$. There exists a circuit $C\leq I$. Let $X:=\cl(C)$, which is a cyclic flat by Lemma \ref{lem:cl(C)}. Clearly, $C \leq I \cap X$ and so
    $$ X = \cl(C) \leq \cl(I \cap X) \leq X, $$
    which implies that $\cl(I\cap X)=X$. In particular, $r(X)=r(I\cap X)\leq \dim(I\cap X)$. Furthermore, as $I\cap X$ contains the circuit $C$, we have that $r(I\cap X) < \dim(I \cap X)$.
    \item A circuit $C$ is a minimal dependent space with $r(C)=\dim(C)-1$. Moreover, by Lemma \ref{lem:cl(C)}, the closure $\cl(C)$ is a cyclic flat with the property that $r(\cl(C))=r(C)$. Hence, by setting $X=\cl(C)$, we get the statement. \qedhere
\end{enumerate}
\end{proof}

\begin{definition}
Let $1\leq k\leq n$. For each $U\in\mL(E)$, define $r(U) :=\min\{k,\dim(U)\}$. Then $(E,r)$ is a $q$-matroid. It is called the \textbf{uniform $q$-matroid on $E$ of rank $k$} and is denoted by~$U_{k,n}$.
\end{definition}

\begin{proposition}\label{prop:uniform-cyclic}
   Let $M=(E,r)$ be a $q$-matroid \textcolor{black}{of rank $k$}, with $0<k<n$. Then the following are equivalent.
   \begin{enumerate}
       \item[(1)] $M=U_{k,n}$.
       \item[(2)] $\mZ_r$ contains only two elements, $0_{\mZ_r}=\langle 0 \rangle$ and  $1_{\mZ_r}=E$. Moreover $r(1_{\mZ_r})=k$.
   \end{enumerate}
\end{proposition}
\begin{proof}

That (1) implies (2) is immediate from the definition of the uniform $q$-matroid: the only proper flats of $U_{k,n}$ are the subspaces of $E$ of dimension less than $k$, which are all independent \textcolor{black}{and hence cannot be cyclic by Remark \ref{lem:cyc_dep}}. Then $0_{\mZ_r}=\langle 0\rangle$ and $1_{\mZ_r}=\cyc(E) = E$, the vector space sum of its subspaces of dimension $k+1$.

Suppose now that (2) holds and assume that $M$ is not the uniform $q$-matroid. Then there is a subspace $I\leq E$ of dimension $k$ that is not independent. Since $I$ is not independent, it contains a circuit $C$, with $\dim(C)\leq\: k$. By Lemma \ref{lem:ind-circuit-cflats}, there exists a cyclic flat $X$, such that $\langle 0 \rangle \lneq C\leq X$ and $\dim(C)=r(X)+1$, which implies that $r(X)\leq k-1$.
In particular, there exists a cyclic flat $X \notin \{0_\mZ, 1_{\mZ}\}$, which
contradicts (2). Therefore, $M=U_{k,n}$.
\end{proof}


\section{The Rank Function and Cyclic Flats}\label{sec:cryptomorphism}
In this section, we propose a $q$-cryptomorphism based on cyclic flats. To this end, we consider \textcolor{black}{a $q$-matroid $(E,r)$,} a lattice $\mZ$ of subspaces of $E$ and a function $r_{\mZ}$ on $\mL(E)$, which satisfies the axioms (R1)-(R3) of a rank function. Next, we show that $r(F)=r_{\mZ}(F)$ for every $F \in \mZ$ and we observe that the spaces in $\mZ$ are exactly the cyclic flats of the $q$-matroid $(E,r_{\mZ})$. This section is inspired by Sims' work \cite{sims1977extension}, who proved that any finite lattice is isomorphic to the lattice of cyclic flats of a finite matroid. The same result for matroids was also independently proved by Bonin and de Mier in \cite{bonin2008lattice}, with a different approach.

\begin{definition}
    Let $\mZ$ be a collection of subspaces of $E$ and suppose that
    $(\mZ,\leq,\vee ,\wedge)$ is a lattice with join and meet operations $\vee$ and $\wedge$, \textcolor{black}{ such that for every $Z_1,Z_2\in \mZ$, we have that $Z_1+Z_2\leq Z_1\vee Z_2$ and $Z_1\wedge Z_2\leq Z_1\cap Z_2$, respectively}. Let
    $f:\mZ \rightarrow \Z$ be a map.
    We define the following {\bf cyclic flat} axioms.
    \begin{description}
    \item[(Z1)] We have that $f(0_\mZ)=0$, where $0_\mZ$ is the minimal element of $\mZ$.
    \item[(Z2)] For every $F,G\in\mZ$ such that $G\lneq F$, we have:
    \begin{align*}\label{eq:differenceranks}
        0<f(F)-f(G)<\dim(F) -\dim(G).
    \end{align*}
    \item[(Z3)] For every $F,G\in\mZ$ we have:
    \begin{equation*} \label{eq:submodularity2}f(F)+f(G)\geq f(F \vee G) + f(F\wedge G) + \dim((F\cap G)/(F\wedge G)). \end{equation*}
\end{description}
If $(\mZ,f)$ satisfies the cyclic flat axioms, we say that $\mZ$ is a lattice of {\bf cyclic flats} with respect to $f$.
\end{definition}

The following preliminary result will be used to show that the lattice of cyclic flats of a $q$-matroid satisfies (Z1)--(Z3).

\begin{lemma}\label{lem:quotientflats}
   Let $F$ be a cyclic flat of $M$ and let $G<F$. Then $F/G$ is a cyclic flat of the $q$-matroid $M/G = (E/G, r_{M/G})$.
\end{lemma}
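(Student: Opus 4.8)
The plan is to unpack both defining properties of "cyclic flat" for $F/G$ in the contracted matroid $M/G$ directly from the definition of $r_{M/G}$, using the fact that subspaces of $E/G$ are exactly the spaces $T/G$ with $G \leq T \leq E$, and that $F$ is already a cyclic flat of $M$.

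First I would recall that a cyclic flat of $M/G$ must satisfy two conditions: (i) $r_{M/G}((F/G) + \bar x) > r_{M/G}(F/G)$ for every $1$-dimensional $\bar x \leq E/G$ with $\bar x \not\leq F/G$, and (ii) $r_{M/G}(\bar D) = r_{M/G}(F/G)$ for every $\bar D \leq F/G$ of codimension $1$ in $F/G$. Any such $\bar x$ lifts to a space $G < G+x \leq E$ with $x$ a $1$-dimensional subspace of $E$ not contained in $F$ (if $x \leq F$ then $\bar x \leq F/G$), and $(F/G)+\bar x = (F+x)/G$. Since $r_{M/G}(T/G) = r(T) - r(G)$, condition (i) becomes $r(F+x) - r(G) > r(F) - r(G)$, i.e. $r(F+x) > r(F)$, which holds because $F$ is a flat of $M$. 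This takes care of the "flat" half.

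For the "cyclic" half, a codimension-$1$ subspace $\bar D$ of $F/G$ corresponds to $G \leq D \leq F$ with $\codim_F(D) = 1$; then $r_{M/G}(\bar D) = r(D) - r(G)$ and $r_{M/G}(F/G) = r(F) - r(G)$, so condition (ii) reduces to $r(D) = r(F)$. This is precisely the statement that $F$ is cyclic in $M$, but only for those codimension-$1$ subspaces $D$ of $F$ that contain $G$ — which is all we need, and it is a special case of $F$ being cyclic in $M$. Hence both conditions hold, and $F/G$ is a cyclic flat of $M/G$.

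The only mild subtlety — and the place I would be most careful — is the bookkeeping of the correspondence between subspaces of $E/G$ and subspaces of $E$ containing $G$: specifically, checking that a $1$-dimensional subspace $\bar x$ of $E/G$ not lying in $F/G$ lifts to a $1$-dimensional $x \leq E$ with $x \not\leq F$ and $(F+x)/G = (F/G) + \bar x$, and that codimension-$1$ subspaces correspond correctly. These are standard facts about quotient lattices, so there is no real obstacle; the proof is essentially a translation through the definition of $r_{M/G}$. One should also note at the start that $F/G$ genuinely is a subspace of $E/G$ since $G < F \leq E$, so the statement even makes sense.
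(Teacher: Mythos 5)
Your proof is correct and follows essentially the same route as the paper's: translate both conditions through $r_{M/G}(T/G)=r(T)-r(G)$, lifting a one-dimensional $\bar x\not\leq F/G$ to some $x\not\leq F$ for the flat condition, and identifying codimension-one subspaces of $F/G$ with codimension-one subspaces of $F$ containing $G$ for the cyclic condition. No gaps.
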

 \begin{proof}
 Let $z\leq E$ such that 
 $z\not\leq F$. Since $F$ is a flat of $M$, we have
 $$r_{M/G}((z+G)/G+F/G)= r_{M/G}((z+F)/G) =r(z+F)-r(G) > r(F)-r(G) = r_{M/G}(F/G),$$
 and hence $F/G$ is a flat of $M/G$.

 It remains to show that $F/G$ is cyclic. Every hyperplane in $F/G$ can be written as $D/G$, where $D\in\Hyp(F)$ such that $D$ contains $G$. Since $F$ is cyclic, we have:
 $$ r_{M/G}(D/G) = r(D) - r(G) = r(F)-r(G) = r_{M/G}(F/G),$$
 i.e. $F/G$ is cyclic in $M/G$.
 \end{proof}

\begin{theorem}\label{def:axiomscycflats}
Let $\mZ$ be the lattice of cyclic flats of $M$ and let $f:\mZ\to \Z$, be the map defined by $f(F)=r(F)$ for all $F\in\mZ$. Then $(\mZ,f)$ is a lattice of cyclic flats with respect to $f$, i.e. $(\mZ,f)$ satisfies (Z1)--(Z3).
\end{theorem}
\begin{proof}
(Z1) holds because $0_\mZ=\cl(\langle 0\rangle)$, i.e. it is the vector space sum of the loops of $E$.

To show that (Z2) holds, assume that $F$ and $G$ are two cyclic flats with $G<F$.  By the definition of the rank function $r_{M/G}$, we have:
$$ f(F)-f(G) = r(F)-r(G)=r_{M/G}(F/G)\leq \dim(F/G)=\dim(F)-\dim(G).$$
By Lemma \ref{lem:quotientflats}, $F/G$ is a cyclic flat in $M/G$ and by Remark \ref{lem:cyc_dep}, it must be dependent, forcing the inequality to be strict.

(Z3) follows from Lemma \ref{lem:rA-rcycA} applied to $F\cap G$, combined with submodultarity:
\begin{align*}
   \textcolor{black}{f(F) + f(G) } &= r(F) + r(G) \geq r(F+G) +r(F\cap G) \\
    &= r(F \vee G) + r(F\wedge G) + \dim((F\cap G)/(F\wedge G)) \\
    &=\textcolor{black}{f(F \vee G)+ f(F\wedge G) + \dim((F\cap G)/(F\wedge G))}. \qedhere
\end{align*}
\end{proof}

As an immediate consequence of Theorem \ref{def:axiomscycflats}, we have the following.

\begin{corollary}\label{cor:general}
 Let $F,G$ be \textcolor{black}{two distinct} cyclic flats of $M$, then
       \begin{align}\label{eq:general}
            r(F)-r(G)<\dim(F)-\dim(F\cap G).
       \end{align}
\end{corollary}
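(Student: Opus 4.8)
The plan is to establish the two inequalities in \eqref{eq:general} independently, each from one of the two properties that make $F$ a cyclic flat, and to argue directly rather than by re-deriving (Z2) through the contraction $M/G$. The only inputs I would use are (R2), the monotonicity (n2) of the nullity $n$, Theorem \ref{thm:cycequivalence} (cyclic spaces are exactly the minimal members of the nullity classes $N_i$), and Definition \ref{def:flat} (a flat strictly increases rank when enlarged by a vector outside it). Set $a := n(F)$.

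For the upper bound, which is where the quantity $\dim(F\cap G)$ really enters, I would exploit the cyclicity of $F$. By Theorem \ref{thm:cycequivalence}, $F$ is minimal with respect to inclusion in $N_a$; hence any proper subspace $Y<F$ has $n(Y)\leq a$ by (n2) and $n(Y)\neq a$ by minimality, so $n(Y)<a$. Taking $Y=F\cap G$, which is a proper subspace of $F$ as soon as $F\nleq G$, gives $n(F\cap G)<n(F)$. Since $F\cap G\leq G$, (R2) gives $r(F\cap G)\leq r(G)$, so $\dim(F\cap G)-r(G)\leq \dim(F\cap G)-r(F\cap G)=n(F\cap G)$. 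Chaining these two facts,
$$\dim(F\cap G)-r(G)\leq n(F\cap G)<n(F)=\dim(F)-r(F),$$
and rearranging yields exactly $r(F)-r(G)<\dim(F)-\dim(F\cap G)$. I would stress that this half uses only that $F$ is cyclic and $F\nleq G$; it needs neither that $G$ is cyclic nor that $G$ is a flat, and it remains valid for incomparable $F,G$, so it genuinely sharpens the bound of \eqref{eq:differenceranks} rather than merely reproducing it.

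For the lower bound I would use that $G$ is a flat together with $G<F$: choosing a $1$-dimensional $x\leq F$ with $x\nleq G$, Definition \ref{def:flat} gives $r(G+x)>r(G)$, and (R2) then gives $r(F)\geq r(G+x)>r(G)$, i.e. $0<r(F)-r(G)$. Here the strict containment $G<F$ is essential: for unrelated cyclic flats the difference $r(F)-r(G)$ may be negative, as already seen for $F=0_\mZ$ and $G=1_\mZ$ in a uniform $q$-matroid $U_{k,n}$, so the statement is read in the regime $G<F$ in which \eqref{eq:differenceranks} itself lives. The step I expect to be the crux is the strict inequality $n(F\cap G)<n(F)$: it is precisely the point at which cyclicity of $F$ is indispensable, since for a non-cyclic $F$ some proper subspace shares the nullity of $F$. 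The remaining manipulations are monotonicity bookkeeping, and I would verify that Theorem \ref{thm:cycequivalence} is invoked only in the direction ``cyclic $\Rightarrow$ minimal in $N_a$,'' so that the corollary is obtained independently of the contraction argument behind \eqref{eq:differenceranks}.
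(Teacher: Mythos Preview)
Your argument is correct and takes a genuinely different route from the paper. The paper proves the upper bound by passing to the contraction $M/(F\cap G)$: it invokes Lemma~\ref{lem:quotientflats} to conclude that $F/(F\cap G)$ is a cyclic flat there, then uses Remark~\ref{lem:cyc_dep} (nontrivial cyclic spaces are dependent) to obtain $r_{M/(F\cap G)}(F/(F\cap G))<\dim(F/(F\cap G))$, which unwinds to the desired bound after combining with $r(F\cap G)\leq r(G)$. Your proof replaces the contraction step entirely with the nullity characterization of cyclic spaces from Theorem~\ref{thm:cycequivalence}: minimality of $F$ in $N_{n(F)}$ forces $n(F\cap G)<n(F)$ whenever $F\cap G<F$, and the rest is bookkeeping with (R2). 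This is more elementary---it needs neither Lemma~\ref{lem:quotientflats} nor the contraction formalism---and it makes transparent that the upper bound uses only that $F$ is cyclic (and $F\nleq G$), not that $G$ is a cyclic flat, a point the paper's route leaves implicit. The paper's approach, on the other hand, fits into the contraction framework already set up and reuses that machinery rather than introducing a separate nullity argument. Your diagnosis of the lower bound is also accurate: the paper's proof addresses only the upper inequality, and the strict positivity $0<r(F)-r(G)$ indeed relies on $G<F$ (via $G$ being a flat), exactly as in the hypothesis of (Z2).
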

\begin{proof}
Assume that $F\nleq G$. Clearly $r(F\cap  G)\leq r(G)$ and so by (R2), we have $r(F)-r(G) \leq r(F)-r(F\cap G)$.
By Lemma \ref{lem:quotientflats}, $F/(F\cap G)$ is a cyclic flat in $M/(F\cap G)$, and is therefore dependent, as observed in Remark \ref{lem:cyc_dep}. It follows that
$$r(F)-r(G)\leq r_{M/(F\cap G)}(F/(F\cap G))<\dim(F/(F\cap G))=\dim(F)-\dim(F\cap G).$$
If $F\leq G$, then $F\cap G=F$ and $r(F)<r(G)$, since $F$ is a flat. Hence we have that
\begin{equation*}
      r(F)-r(G)< 0 = \dim(F) - \dim(F\cap G).                      \qedhere
\end{equation*}
\end{proof}

We introduce the following function.

\begin{definition}\label{def:rhoZ}
 Let $\mZ$ be a collection of subspaces of $E$. For every map $h : \mL(E) \longrightarrow \N_0$,
 we define $h_\mZ : \mL(E) \longrightarrow \N_0$ to be the function:
 \begin{align}\label{eq:convolution}
     h_{\mZ}(A) := \min\{h(F) + \dim((A+F)/F) \mid F\in\mZ\}, \textnormal{ for all } A \in \mL(E).
 \end{align}
\end{definition}

The remainder of this section will be devoted to proving \textcolor{black}{that if $\mZ$ is the lattice of cyclic flats of the $q$-matroid $(E,r)$, then $(E,r_\mZ)$ is a $q$-matroid, whose lattice of cyclic flats is equal to $\mZ$ and for which $r_\mZ=r$ as functions on $\mL(E)$.}

We will need to first prove a number of preliminary results.

\begin{lemma}\label{lem:extraaxiom}
Let $(\mZ,f)$ be a lattice of cyclic flats. Then, for every $F,G\in\mZ$, we have
\begin{equation}\label{eq:extraZ4}
    f(F\vee G) \leq f(G) + \dim(F/(F\cap G)).
\end{equation}
\end{lemma}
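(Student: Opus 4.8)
The plan is to reduce the claimed inequality to an application of the cyclic-flat axioms already established, most directly (Z3) from Theorem~\ref{def:axiomscycflats}, together with the definition $F \vee G = \cl(F+G)$ and Proposition~\ref{lem:rA-rcycA}. First I would note that $r(F \vee G) = r(\cl(F+G)) = r(F+G)$, since closure does not change rank. So the statement to prove becomes $r(F+G) \le r(G) + \dim(F/(F\cap G))$, which I recognize as nothing but the submodularity inequality (R3) rewritten: indeed $r(F+G) + r(F\cap G) \le r(F) + r(G)$ gives $r(F+G) \le r(G) + (r(F) - r(F\cap G))$, and then $r(F) - r(F\cap G) \le \dim(F) - \dim(F\cap G) = \dim(F/(F\cap G))$ follows from (R1)–(R2) applied to the inclusion $F\cap G \le F$ (monotonicity and the dimension bound, or equivalently $n(F\cap G) \le n(F)$ via property (n2) of the nullity).

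Concretely, the key steps in order would be: (i) invoke $r(F\vee G) = r(F+G)$ because $F+G \le \cl(F+G)$ and $\cl$ preserves rank, or alternatively cite the displayed identities immediately following Proposition~\ref{prop:lattice}; (ii) apply submodularity (R3) to the pair $F, G$ to get $r(F+G) \le r(F) + r(G) - r(F\cap G)$; (iii) bound $r(F) - r(F\cap G) \le \dim(F) - \dim(F\cap G)$ using that $\dim - r$ is monotone (property (n2)); (iv) combine to obtain $r(F+G) \le r(G) + \dim(F) - \dim(F\cap G) = r(G) + \dim(F/(F\cap G))$. This is a short chain of inequalities with no genuine obstacle.

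If one wants the proof to fit more snugly into the cyclic-flats framework (as the numbering \eqref{eq:extraZ4} as a putative ``(Z4)'' suggests), an alternative is to derive it from Corollary~\ref{cor:general}: since $F \vee G = \cl(F+G)$ is a flat but typically not cyclic, one instead writes $r(F\vee G) = r(F+G)$ and then applies Corollary~\ref{cor:general} to the cyclic flats $F \vee G$ and $G$ — but this requires $G \le F\vee G$ and gives a bound involving $\dim((F\vee G)/G)$ rather than $\dim(F/(F\cap G))$, so the cleanest route really is the direct submodularity argument above. The only mild subtlety is making sure to justify step (i) cleanly; everything else is routine. I expect step (i) — the reduction $r(F\vee G) = r(F+G)$ — to be the only point worth stating explicitly, and it is already recorded in the displayed equations after Proposition~\ref{prop:lattice}, so even that is immediate.

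\begin{proof}
Since $F+G \leq \cl(F+G) = F\vee G$ and the closure operator preserves rank, we have $r(F\vee G) = r(F+G)$; this is recorded in the display following Proposition~\ref{prop:lattice}. By submodularity (R3) applied to $F$ and $G$,
\begin{equation*}
r(F+G) \leq r(F) + r(G) - r(F\cap G).
\end{equation*}
Moreover, property (n2) of the nullity function applied to the inclusion $F\cap G \leq F$ gives $\dim(F\cap G) - r(F\cap G) \leq \dim(F) - r(F)$, that is,
\begin{equation*}
r(F) - r(F\cap G) \leq \dim(F) - \dim(F\cap G) = \dim(F/(F\cap G)).
\end{equation*}
Combining the two displays yields
\begin{equation*}
r(F\vee G) = r(F+G) \leq r(G) + \bigl(r(F) - r(F\cap G)\bigr) \leq r(G) + \dim(F/(F\cap G)),
\end{equation*}
which is \eqref{eq:extraZ4}.
\end{proof}
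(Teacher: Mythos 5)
Your proof is correct as a proof of the lemma as literally stated, but it takes a genuinely different route from the paper's. You work with the ambient rank function on all of $\mL(E)$: you apply submodularity (R3) to the pair $F,G$ and then bound $r(F)-r(F\cap G)$ by $\dim(F/(F\cap G))$ via monotonicity of the nullity. The paper instead splits into the cases $F\leq G$, $G\leq F$, and the generic case, and in the generic case combines (Z2) applied to the pair $F\wedge G\leq F$ with (Z3) applied to $F,G$; the two dimension terms $\dim(F/(F\wedge G))-\dim((F\cap G)/(F\wedge G))$ then telescope to $\dim(F/(F\cap G))$. The substantive difference is which values of $r$ each argument consumes: your chain evaluates $r$ at $F\cap G$, which is a flat but in general \emph{not} a cyclic flat, so your argument needs the full rank function; the paper's argument only ever evaluates $r$ at elements of $\mZ$ (namely $F$, $G$, $F\wedge G$, $F\vee G$), so inequality \eqref{eq:extraZ4} is exhibited as a formal consequence of the axioms (Z1)--(Z3) alone. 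That is exactly the property one wants in the cryptomorphism section, where a lattice $\mZ$ and a function defined only on $\mZ$ are the given data --- you noticed this possibility yourself (the ``putative (Z4)'' remark) but discarded the axiomatic route; the paper shows it does go through. Your version buys brevity and no case analysis; the paper's buys portability to the abstract setting.
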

\begin{proof}
We distinguish three cases.
If $F\leq G$, then $F\vee G=G$ and Equation \eqref{eq:extraZ4} clearly holds.

\noindent
If $G\leq F$, then the result directly follows from (Z2).

\noindent
Finally, assume that $F\nleq G$ and $G\nleq F$. Then apply (Z2) to $F\wedge G$ and $F$ and apply (Z3) to $F$ and $G$ to obtain:
\begin{gather*}
    f(F) \leq f(F\wedge G) + \dim(F/(F\wedge G))\\
   f(F\vee G) + f(F\wedge G) \leq  f(F)+f(G)- \dim((F\cap G)/(F\wedge G)).
\end{gather*}
Combining these inequalities we get the desired result.
\end{proof}

 It is straightforward to check that if  $(\mZ,f)$ is a lattice of cyclic flats of $E$ then $f$ and $f_\mZ$ both agree on $\mZ$, as we show in the following proposition.

\begin{proposition}\label{prop:restrictionrho}
Let $(\mZ,f)$ be a lattice of cyclic flats. Then, for every $A\in \mZ$, we have $f_\mZ(A) =f(A)$.
\end{proposition}
\begin{proof}
Let $A\in \mZ$. By definition,
$f_\mZ(A)=\min\{f(F) + \dim((A+F)/F)\mid F\in \mZ\}$. In particular,
$$f_\mZ(A) \leq f(A) + \dim(A/A)=f(A).$$
For the opposite inequality, by applying Lemma \ref{lem:extraaxiom}, we have:
$$f(A)\leq f(A\vee G) \leq f(G) + \dim((A+G)/G),$$
for every $G\in\mZ$. In particular, $f(A)\leq f_\mZ(A)$.
\end{proof}

The following inequalities will be very useful for the next propositions.
\begin{lemma}\label{lem:dims}
Let $X,Y, V, W \leq E$. The following inequalities hold:
\begin{eqnarray}
    \label{eq:rleqdim}
    \dim(X/(X\cap V))\leq   \dim(X/(X\cap Y)) + \dim(Y/(Y\cap V)) .\\
    \dim((X+Y+V+W)/(V+W)) +\dim(((X\cap Y)+(V\cap W))/(V\cap W)) \nonumber
   \\\leq \dim((X+V)/V) + \dim((Y+W)/W). \label{eq:third}
\end{eqnarray}
\end{lemma}
\begin{proof}
The inequality (\ref{eq:rleqdim}) holds if and only if
\[
 \dim(X \cap V) + \dim(Y) -(\dim(X \cap Y) + \dim(Y \cap V)) \geq 0.
\]
However, this holds since
\[
  \dim(X \cap Y) + \dim(Y \cap V) = \dim(X \cap Y + Y \cap V)-\dim(X \cap Y \cap V) \leq \dim(Y) - \dim(X \cap Y \cap V).
\]

To see that (\ref{eq:third}) holds, we note the following:
\begin{align*}
   \dim((X+V)/V) + \dim((Y+W)/W) = \\
   \dim(X+Y + V+ W) + \dim((X+V) \cap(Y+W)) - \dim(V+W) -\dim(V \cap W)=\\
   \dim((X+Y + V+ W)/(V+W))+ \dim((X+V) \cap(Y+W)) -\dim(V \cap W)\geq \\
   \dim((X+Y + V+ W)/(V+W))+\dim(((X\cap Y)+(V\cap W))/(V\cap W)),
 \end{align*}
 where the last inequality holds since $(X\cap Y)+(V\cap W)\leq (X+V)\cap(Y+W)$.
\end{proof}

\begin{proposition}\label{prop:rzsubmod}
\textcolor{black}{Let $(\mZ,f)$ be a lattice of cyclic flats.}
Then $f_{\mZ}$ satisfies the axioms~(R1)--(R3). In particular, $(E,f_{\mZ})$ is a $q$-matroid.
\end{proposition}
\begin{proof}
\begin{itemize}
\item[(R1)] For every $A\in\mL(E)$, we have $f_\mZ(A)\geq 0$ being the sum of two non-negative integers. Moreover, since $f(0_\mZ)=0$, by (Z1), we have that
\begin{align*}
    f_\mZ(A) \leq f(0_\mZ)+ \dim(A/(A\cap 0_{\mZ})) \leq\dim A.
\end{align*}
\item[(R2)] Let $A\leq B$ \textcolor{black}{and let $F\in\mZ$ be such that $f_\mZ(B)=f(F)+\dim(B/(B\cap F))$}. Then (R2) follows by the definition of $f_\mZ$, since, $$f_\mZ(B)=f(F)+\dim(B/(B\cap F))\stackrel{\eqref{eq:rleqdim}}\geq f(F)+\dim(A/(A\cap F))\geq f_\mZ(A).$$

\item[(R3)] Let $A,B\leq E$ and $F_A,F_B\in\mZ$ be such that
$f_{\mZ}(A) = f(F_A)+\dim(A/A \cap F_A)$ and $f_{\mZ}(B) = f(F_B)+\dim(B/B \cap F_B)$. \textcolor{black}{Using the fact that $F_A+F_B\leq F_A \vee F_B$ and $F_A \wedge F_B \leq F_A \cap F_B$,} we have that
\textcolor{black}{{\small{
\begin{align*}
    f_{\mZ}(A+B)+f_{\mZ}(A\cap B)&\leq f(F_A\vee F_B) + \dim((A+B+(F_A\vee F_B))/(F_A\vee F_B)) +\\ &\ \ \ \ f(F_A\wedge F_B) + \dim(((A\cap B)+(F_A\wedge F_B))/(F_A\wedge F_B)) \\
     &\stackrel{(Z3)}\leq f(F_A) + f(F_B) - \dim((F_A \cap F_B)/(F_A \wedge F_B)) +\\ &\ \ \ \ \ \dim((A+B+F_A+ F_B)/(F_A+ F_B))+ \\ &\ \ \ \ \  \dim(((A\cap B)+(F_A\cap F_B))/(F_A\wedge F_B)) \\
    &\stackrel{\eqref{eq:third}}\leq f(F_A)+f(F_B) + \dim((A+F_A)/F_A) +  \dim((B+F_B)/F_B) \\
    &=f_{\mZ}(A) + f_{\mZ}(B).
\end{align*}}}}
This establishes the submodularity of $f_\mZ$.
\end{itemize}
Since (R1), (R2) and (R3) hold, we conclude that $(E,f_\mZ)$ is a $q$-matroid.
\end{proof}

\begin{remark}\label{rem:convolution_submodula_modular}
A function that satisfies (R3) with equality is called \textbf{modular}. Let $S$ be a set and let $2^S$  be the collection of subsets of $S$.
In \cite[Theorem 2.5]{lovasz1983submodular}, Lov\'asz showed that if $f,g$ are two functions defined on $2^S$ such that $f$ is submodular and $g$ is modular, the convolution defined as
$$ f\ast g (B) = \min\{f(A) +g(B-A) \mid A\subseteq B\}$$
is submodular.
It is straightforward to check that the same can be said when the two functions are defined on $\mL(E)$, and $g(A)=\dim(A)$ for all $A\in\mL(E)$; see \cite[Theorem 24]{ceria2021direct}. In Proposition \ref{prop:rzsubmod}, we showed that submodularity is also satisfied when the convolution is not taken over all the spaces, as in \eqref{eq:convolution}.
Thus if $f$ is submodular on $\mL(E)$, then $f_\mZ$ is obtained as a convolution of $f$ with the dimension function and so inherits the submodularity property from $f$.
\end{remark}

\begin{theorem}\label{thm:proofthm}
\textcolor{black}{Let $(\mZ,f)$ be a lattice of cyclic flats. Then $\mZ$ is the lattice of cyclic flats of the $q$-matroid $M_\mZ:=(E,f_\mZ)$. }
\end{theorem}
\begin{proof}
From Proposition \ref{prop:rzsubmod}, we have that $M_\mZ$ is a $q$-matroid.
Let $F\in\mZ$. We show that $F$ is a flat of $M_{\mZ}$. Let $x\in\mL(E)$ be such that $x\nleq F$. Then there exists $G\in\mZ$ such that
\begin{align*}
    f_{\mZ}(F+x) &= f(G) + \dim((F+x)+G)/G).
\end{align*}
If $F=G$, then  $f_{\mZ}(F+x)=f(F)+1=f_\mZ(F)+1$.

\noindent
If $F\ne G$, then $F < F \vee G$ and so by (Z2) and Lemma \ref{lem:extraaxiom}, we have that
\[
f(F)-f(G) < f(F \vee G) - f(G)\leq \dim(F+G) -\dim(G).
\]
Therefore,
\begin{align*}
    f_{\mZ}(F+x) &= f(G) + \dim((x+F+G)/G) \\
    & > f(F) -\dim(F+G) +\dim(G) + \dim(x+F+G) -\dim(G) \\
    & = f_\mZ(F) -\dim(F+G) + \dim(F+G)+1 -\dim(x \cap (F+G))\\
    & = f_\mZ(F) +1 -\dim(x \cap (F+G))\\
    &=f_\mZ(F).
\end{align*}
In both cases we see that $f_\mZ(F+x)>f_{\mZ}(F)$, so $F$ is a flat of $M_\mZ$.

\noindent
Now, let $D\in \Hyp(F)$ and let $G\in\mZ$ be such that
$$f_\mZ(D) = f(G) + \dim((D+G)/G).$$
If $F=G$, then $f_\mZ(D) = f(F) = f_\mZ(F)$, so $F$ is cyclically closed in $M_\mZ$.

\noindent
If $F\ne G$, then again by (Z2) and Lemma \ref{lem:extraaxiom}, we have that
\begin{align*}
    f_\mZ(D) &= f(G) + \dim((D+G)/G) \\
    &
    > f(F)-\dim((F+G)/G) + \dim((D+G)/G) \\
    & = f_\mZ(F) - \dim(F+G)+\dim(D+G) \\
    & \geq f_\mZ(F)-1.
\end{align*}
We conclude that $f_\mZ(D)=f_\mZ(F)$ and hence $F$ is cyclic in $M_{\mZ}$.

Finally, we show that every cyclic flat of $M_\mZ$ is in $\mZ$. Let $F\in\mL(E)$ be a cyclic flat. Let $G\in \mZ$ be such that
$ f_\mZ(F) = f(G)+\dim((F+G)/G).$ Since $F$ is cyclic, for every subspace $D\in\Hyp(F)$, we have
$$ f_\mZ(D) = f_\mZ(F) \geq f(G)+\dim((D+G)/G)\geq f_\mZ(D).$$
This implies that $F\leq G$. Since $F$ is also a flat, for every $x\nleq F$, we have that
$$f_\mZ(F)<f_\mZ(F+x) \leq f(G)+\dim((F+x+G)/G),$$
which implies that $x\nleq G$. These together show that $F=G$ and, in particular, $F\in\mZ$.
\end{proof}

We summarize the previous results as the following corollary, \textcolor{black}{which gives in turn a new $q$-cryptomorphism}.

\begin{corollary}\label{thm:axiomscheme}\label{thm:cryptomorphism}
Let $(E,r)$ be a $q$-matroid and $(\mZ,\leq,\vee,\wedge)$ be a lattice of subspaces of $E$.
\begin{enumerate}
    \item[(1)] If $\mZ$ is the lattice of cyclic flats of $(E,r)$ then $(\mZ,r)$ satisfies the cyclic flat axioms (Z1)-(Z3). In particular, $(E,r_{\mZ})$ is a $q$-matroid satisfying $r_{\mZ}(A)=r(A)$ for all $A \in \mZ$.
    \item[(2)] If $(\mZ,f)$ satisfies the cyclic flat axioms (Z1)-(Z3) then $\mZ$ is the collection of cyclic flats of the $q$-matroid $(E,f_\mZ)$. In particular, $\mZ=\mZ_{f_\mZ}$.
    \item[(3)] If $\mZ$ is the lattice of cyclic flats of $E$, then $(E,r)=(E,r_\mZ)$.
\end{enumerate}
\end{corollary}

\begin{proof}
   The first statement of Part (1) follows from Theorem \ref{def:axiomscycflats}. The next statement is a consequence of Proposition \ref{prop:restrictionrho} and Theorem \ref{prop:rzsubmod}. Part (2) is the statement of Theorem \ref{thm:proofthm}. Part (3) can be deduced by Parts (1) and (2) combined with  Proposition \ref{prop:reconstruction}.
\end{proof}


\section{Rank-Metric Codes and {\em q}-Matroids}\label{sec:rank-metric}
In this section we focus on representable $q$-matroids and establish a connection between the supports of the codewords of a rank-metric code and the open spaces of its associated $q$-matroid.
In the classical theory, matroids were introduced by Whitney in \cite{whitney1935abstract} in order to generalize the notion of independence in linear algebra. Similarly, we show how the concept of $q$-matroidal independence generalizes a notion of independence for an $\F_q$-subspace over $\F_{q^m}$. In our approach, we will refer to the concept of an $[n,k]_{q^m/q}$-system, introduced in \cite{alfarano2021linear,randrianarisoa2020geometric}, which interprets an $\F_{q^m}$-linear rank-metric code as a geometrical object.

We start by briefly recalling some basic notions on rank-metric codes and explaining a link to $q$-matroids; see \cite{jurrius2018defining}.

Consider the vector space $\F_{q^m}^{n}$, endowed with the following \textbf{rank distance}, defined as $\mathrm{d}_{\mathrm{rk}}(u,v):=\rk(u-v)$ for every $u,v\in\F_{q^m}^n$, where given $v = (v_1,\ldots,v_n)\in\F_{q^m}^n$, we have
$$ \rk(v):=\dim_{\F_q}\langle v_1, \ldots, v_n\rangle _{\F_q}.$$

\begin{definition}
 An $\F_{q^m}$-linear subspace $\mC$ of the metric space $\F_{q^m}^n$ is called a \textbf{rank-metric code}. We say that $\mC$ is an $[n,k]_{q^m/q}$ code, if $k$ is the dimension of $\mC$ over $\F_{q^m}$.
 A matrix $G\in\F_{q^m}^{k\times n}$ of rank $k$ whose rows generate $\mC$ is called a \textbf{generator matrix}. The \textbf{dual code} $\mC^\perp$ of $\mC$ is the $[n,n-k]_{q^m/q}$ rank-metric code comprising vectors orthogonal to all the vectors in $\mC$ with respect to the standard dot product defined by $x \cdot y = \sum_{j=1}^n x_j y_j$ for all $x,y \in \F_{q^m}^n$. Finally, we say that an $[n,k]_{q^m/q}$ rank-metric code $\mC$ is \textbf{non-degenerate} if the columns of any generator matrix for $\mC$ are linearly independent over $\F_q$.
\end{definition}

For a vector
$v \in \F_{q^m}^n$ and an ordered basis $\Gamma=\{\gamma_1,\ldots,\gamma_m\}$ of the field extension
$\F_{q^m}/\F_q$, let $\Gamma(v) \in \F_q^{n \times m}$ be the matrix defined by
$$v_i= \sum_{j=1}^m \Gamma(v)_{ij} \gamma_j.$$
The \textbf{support} of $v$ is the column space of $\Gamma(v)$ for any basis $\Gamma$; we denote it by $\sigma(v)$.

Let $\mC$ be an $[n,k]_{q^m/q}$ rank-metric code and let $G$ be a generator matrix of $\mC$. For every $U\leq \F_q^n$, define the space
$$ \mC_U :=\{v\in\mC \mid \sigma(v)\leq U^\perp\}.$$
Moreover, consider the following function:
\begin{align}\label{eq:rankfunct}
    r : \mL(\F_q^n)\to \Z, \ U \mapsto k - \dim_{\F_{q^m}}(\mC_U).
\end{align}
Note that for any $U\leq \F_q^n$, $r(U) = \rk(G A^U)$, where $A^U$ is a matrix whose columns form a basis of $U$. In fact, $r$ is the same rank function as defined in \eqref{eq:rank1} and it is independent of the choice of generator matrix $G$. $(\F_q^n,r)$ is called the \textbf{$q$-matroid associated to $\mC$} and is denoted by~$M_\mC$.

\begin{definition}
 A $q$-matroid $M=(\F_q^n,r)$ of rank $k$ is \textbf{representable} if there exists some positive integer $m$ and an $[n,k]_{q^m/q}$ rank-metric code $\mC$ such that $M=M_\mC$.
\end{definition}

Non-degenerate rank-metric codes have an equivalent geometric description as $q$-systems; see \cite{alfarano2021linear, randrianarisoa2020geometric}.

\begin{definition}
 An \textbf{$[n,k]_{q^m/q}$  system} is an $n$-dimensional $\Fq$-space $\mU\leq \Fm^k$ with the property that $\langle \mU\rangle_{\Fm}=\Fm^k$.
When the parameters are not relevant, we simply call such an object a \textbf{$q$-system}. Moreover, two $[n,k]_{q^m/q}$  systems $\mU$ and $\mU^\prime$ are equivalent if there exists an $\F_{q^m}$-isomorphism $\phi\in\GL(k,q^m)$ such that $\phi(\mU)=\mU^\prime$.
\end{definition}

There is a correspondence between equivalence classes of non-degenerate $[n,k]_{q^m/q}$ rank-metric codes and equivalence classes of $[n,k]_{q^m/q}$  systems. We briefly explain this connection; for more details we refer the interested reader to \cite{alfarano2021linear, randrianarisoa2020geometric}.

Let $\mC$ be an $[n,k]_{q^m/q}$ non-degenerate rank-metric code with generator matrix $G$. Then the $\F_q$-span $\mU$ of the columns of $G$ is an $[n,k]_{q^m/q}$ system, it is isomorphic to $\F_q^n$ and we call it the \textbf{$q$-system associated to $\mC$}.
Conversely, if $\mU\leq \Fm^k$ is an $[n,k]_{q^m/q}$-system and $G\in\F_{q^m}^{k\times n}$ is a matrix whose columns form a basis of $\mU$, then clearly, the rows of $G$ generate an $[n,k]_{q^m/q}$ rank-metric code.

We recall the following result which provides a natural description of the supports of codewords of $\mC$ in the $q$-system $\mU$ associated to $\mC$.
\begin{theorem}\cite[Theorem 3.1]{neri2021geometry}
Let $\mC$ be an $[n,k]_{q^m/q}$ non-degenerate rank-metric code and let $\mU$ be the $\F_q$-span of the columns of a generator matrix $G$. Consider the isomorphism
\begin{align}\label{eq:psiG}
    \psi_G: \F_q^n \to \mU, \quad  v \mapsto vG^\top.
\end{align}
For every $u\in\F_{q^m}^k$ we have that
$$\psi_G^{-1}(\mU \cap \langle u \rangle ^\perp)=\sigma(uG)^\perp.$$
\end{theorem}

In the following, we use the terminology ``linear basis" of a subspace $V\leq \F_q^n$ to refer to a basis of $V$ as a vector space. This is to distinguish to the notion of basis in the $q$-matroid~ sense.

\begin{definition}\label{def:spaceIndep}
  Let $\mC$ be an $[n,k]_{q^m/q}$ rank-metric code with generator matrix $G$ and $\mU$ be the $\F_q$-span of the columns of $G$. An $\F_q$-subspace $V\leq \mU$ is called \textbf{$\F_{q^m}$-independent} if
  $$ \dim_{\F_{q^m}}(V\otimes_{\F_q} \F_{q^m}) = \dim_{\F_q}(V),$$
  i.e. the vectors in one (and hence in any) linear basis of $V$ are linearly independent over $\F_{q^m}$.
\end{definition}

Thanks to Definition \ref{def:spaceIndep}, we immediately obtain the $q$-analogue of a well-known result in classical matroid theory; see for instance \cite[Theorem 1.1.1]{oxley}.

\begin{theorem}\label{thm:independentrank}
Let $G\in\F_{q^m}^{k\times n}$ be a full rank matrix whose columns are linearly independent over $\F_q$ and let $\mU$ be the $\F_q$-span of the columns of $G$. Let $\psi_G: \F_q^n \to \mU,\   v \mapsto vG^\top$.
Let $M[G]=(\F_q^n,r)$ and let $\I_r$ be the collection
of independent spaces of $M[G]$. Let $\I:=\{I\leq \mU \mid I \textnormal{ is } \F_{q^m}\textnormal{-independent}\}$. We have that
\begin{enumerate}
    \item  $(\mU,\I)$ is a $q$-matroid,
    \item $\{\psi_G(I)\leq \mU \mid I \in \I_r \} = \I.$
\end{enumerate}
\end{theorem}
\begin{proof}
Since the columns of $G$ are linearly independent over $\F_q$, we have that $\psi_G(\F_q^n) = \mU$ and so $\psi_G$ is an isomorphism.
Let $r':\mU\to\Z, \  U\to \rk\left(GA^{\psi_G^{-1}(U)}\right)$, so $r' = r \circ\psi_G^{-1} $. Clearly, $(\mU,r')$ is a $q$-matroid and indeed is equivalent to $M[G]$. The fact that $\I$ is the collection of independent spaces of $(\mU,r')$ follows from the observation that for every $U\leq \mU$, we have that
\begin{equation*}
    \rk\left(GA^{\psi_G^{-1}(U)}\right) = \dim_{\F_{q^m}}(U \otimes_{\F_q} \F_{q^m}).             \qedhere
\end{equation*}
\end{proof}

Let $\mC$ be the $[n,k]_{q^m/q}$ non-degenerate rank-metric code \textcolor{black}{generated by a matrix $G\in\F_q^{k\times n}$}, and let $M_\mC=(\F_q^n,r)$ be its associated $q$-matroid. Recall that $M_\mC^\ast=M_{\mC^\perp}$, where $\mC^\perp$ is the dual code of $\mC$; see \cite{jurrius2018defining}.
The rest of this section is devoted to establishing the connection between the supports of the codewords of $\mC$ and $M_\mC^\ast$.

\begin{definition}
 A nonzero codeword $c\in\mC$ is called \textbf{minimal} if for every $c^\prime \in\mC$ we have that
 $$ \sigma(c^\prime)\leq \sigma(c) \Leftrightarrow c=\lambda c^\prime, \ \textnormal{ for some } \lambda\in\F_{q^m}^\ast.$$
\end{definition}

\begin{lemma}\label{lem:dependent}
For every codeword $v\in\mC^\perp$, the support $\sigma(v)\leq \F_q^n$ is a dependent space of $M_\mC$.
\end{lemma}
\begin{proof}
Let $v=(v_1,\ldots,v_n)\in\mC^\perp$ and let $N \in \F_q^{t\times n}$ have rowspace $\sigma(v)$.
Let $A\in\GL(n,q)$ be a suitable invertible matrix such that $\sigma(vA) = \langle e_1,\ldots,e_t\rangle_{\F_q}$, in which case $vA$ belongs to the dual of the code generated by $G(A^\top)^{-1}$. From basic linear algebra, we have that $\sigma(vA) = A^\top \sigma(v)$.
 Then
\begin{align*}
     r(\sigma(v)) = \rk(G N^\top) 
     = \rk(G (A^T)^{-1} (NA)^T)
     = \rk  \left(G(A^\top)^{-1}\left(\begin{array}{c}
         \mathrm{Id}_t \\
           0
     \end{array} \right)\right) <t.
\end{align*}
Hence, $\sigma(v)$ is a dependent space in the $q$-matroid $M_{\mC}$.
\end{proof}

A different proof for  Lemma \ref{lem:dependent} has been provided in \cite{byrne2021weighted}, which makes use of the characteristic polynomial of the $q$-matroid $M_{\mC}$.

\begin{remark}
The converse of Lemma \ref{lem:dependent} is in general not true.
For example, let $v\in \mC^\perp$ such that $\rk_{\F_q}(v)$ is maximal over all members of $\mC^\perp$. Let $U$ be any subspace of $\Fq^n$ that properly contains $\sigma(v)$. Then $U$ is a dependent space of $M_{\mC}$ since it contains $\sigma(v)$, which is dependent by Lemma \ref{lem:dependent}. But by our choice of $v$, there is no word $u \in \mC^\perp$ with $\sigma(u)=U.$
\end{remark}

However, we can say that every dependent space $D$ of $M_\mC$ contains the support of a codeword of $\mC^\perp$.
In particular, we have the following result, which immediately follows from Lemma \ref{lem:dependent} and the definitions of circuit and minimal codeword.

\begin{lemma}\label{lem:mincodcircuits}
A subspace $C\leq \F_q^n$ is a circuit of $M_{\mC}$ if and only if $C$ is the support of a minimal codeword in $\mC^\perp$.
\end{lemma}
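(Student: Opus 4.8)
The plan is to prove both directions of the equivalence by combining Lemma~\ref{lem:dependent} (and the remark following it) with Corollary~\ref{cor:sumcircuits}, which identifies circuits as the minimal dependent spaces and expresses every cyclic space as a sum of the circuits it contains. The overall strategy: translate ``minimal codeword'' and ``circuit'' into a common language, namely minimality with respect to inclusion among supports of $\mC^\perp$-codewords versus minimality among dependent spaces of $M_\mC$, and show these two minimality notions coincide on the relevant family of subspaces.

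First I would treat the direction ``$C$ is the support of a minimal codeword $\Rightarrow C$ is a circuit.'' Suppose $C = \sigma(v)$ for a minimal $v \in \mC^\perp$. By Lemma~\ref{lem:dependent}, $C$ is a dependent space of $M_\mC$, so it suffices to show every proper subspace of $C$ is independent; equivalently, $C$ is a \emph{minimal} dependent space. If some proper subspace $D \lneq C$ were dependent, then (using the fact noted just before Lemma~\ref{lem:mincodcircuits}, that every dependent space contains the support of a codeword of $\mC^\perp$) there is $0\ne w \in \mC^\perp$ with $\sigma(w) \leq D \lneq C = \sigma(v)$. Minimality of $v$ forces $w = \lambda v$ for some $\lambda \in \F_{q^m}^\ast$, hence $\sigma(w) = \sigma(v) = C$, contradicting $\sigma(w) \leq D \lneq C$. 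So $C$ is a minimal dependent space, i.e. a circuit.

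Next I would treat the converse: ``$C$ a circuit $\Rightarrow C = \sigma(v)$ for some minimal $v \in \mC^\perp$.'' Since $C$ is dependent, by the remark preceding the lemma there is $0 \ne v \in \mC^\perp$ with $\sigma(v) \leq C$; by Lemma~\ref{lem:dependent}, $\sigma(v)$ is dependent, and since every proper subspace of the circuit $C$ is independent, we must have $\sigma(v) = C$. It remains to check $v$ is minimal: if $w \in \mC^\perp$ with $\sigma(w) \leq \sigma(v) = C$, then $\sigma(w)$ is dependent by Lemma~\ref{lem:dependent}; as $C$ is a circuit and $\sigma(w)$ is a nonzero dependent subspace of $C$, minimality of $C$ among dependent spaces gives $\sigma(w) = C$. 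Finally I must upgrade ``$\sigma(w) = \sigma(v)$'' to ``$w$ is an $\F_{q^m}$-scalar multiple of $v$''; this is exactly where the support structure of rank-metric codes enters. If two codewords of an $\F_{q^m}$-linear code have the same $\F_q$-support $C$ of dimension $r = \rk_{\F_q}(v)$, then after the $\GL(n,q)$-change of coordinates of Lemma~\ref{lem:dependent} both lie in $\F_{q^m}^r \times \{0\}^{n-r}$ with full $\F_q$-rank $r$; the dimension count forcing $\mC^\perp$ restricted to this coordinate block to be at most one-dimensional (this is the nontrivial input, following from the Singleton-type bound / the fact that a codeword of rank $r$ supported on $r$ coordinates spans, over $\F_{q^m}$, the whole $r$-dimensional block only if no independent companion exists — more carefully, one argues that the projection of $\mC^\perp$ to these $r$ coordinates has $\F_{q^m}$-dimension one because otherwise one produces a $\mC^\perp$-word of rank $<r$ with support strictly inside $C$, contradicting that $C$ is a circuit) yields $w = \lambda v$.

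The main obstacle I anticipate is precisely this last step: cleanly justifying that two $\mC^\perp$-codewords with the same circuit support must be $\F_{q^m}$-proportional. The cleanest route is to avoid a direct linear-algebra computation and instead argue by minimality: if $w, v \in \mC^\perp$ have $\sigma(w) = \sigma(v) = C$ but $w, v$ are $\F_{q^m}$-linearly independent, then for a suitable $\lambda \in \F_{q^m}$ the codeword $w - \lambda v$ is nonzero with $\sigma(w - \lambda v) \lneq C$ (choosing $\lambda$ to kill one coordinate, which strictly drops the support since $C$ has dimension equal to the $\F_q$-rank of $v$), and then $\sigma(w-\lambda v)$ is a nonzero dependent subspace (Lemma~\ref{lem:dependent}) strictly inside the circuit $C$ — contradiction. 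This keeps the whole proof within the matroid/support framework already developed, so only routine verification remains.
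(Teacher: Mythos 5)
Your proof is correct and follows the same route as the paper, whose entire proof is the single sentence ``The result immediately follows from Lemma~\ref{lem:dependent} and the definitions of circuit and minimal codewords.'' You simply supply the details that sentence suppresses --- in particular the only genuinely nontrivial point, that two codewords of $\mC^\perp$ sharing a circuit as common support must be $\F_{q^m}$-proportional, which your normalization-plus-$w-\lambda v$ argument handles correctly.
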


Recall also the following useful result.
\begin{lemma}\cite[Lemma 3.3]{jurrius2017defining}\label{lem:Fromrelinde}
For every $U\leq\F_q^n$, $c \in\mC_U$ if and only if $c \cdot x= 0$ for all~$x\in U$.
\end{lemma}

\begin{theorem}\label{thm:cycsupp}
Let $c\in\mC^\perp$ and let $V=\sigma(c)$. Then $V$ is a cyclic space in $M_{\mC}$.
\end{theorem}
\begin{proof}
Let $c=(c_1,\ldots, c_n)\in\mC^\perp$ and assume that $\dim_{\F_q}\sigma(c)=t$. Up to multiplying the code $\mC$ by an invertible matrix, as in the proof of Lemma \ref{lem:dependent}, we may assume that $V=\langle e_1,\ldots,e_t\rangle_{\F_q}$.
By Lemma \ref{lem:dependent}, $V$ is a dependent space of $M$.
To show that $V$ is cyclic, we need to prove that
$$ \dim_{\F_{q^m}}(\mC_V) = \dim_{\F_{q^m}}(\mC_U),$$
for every $U\in\Hyp(V)$.
Clearly, for any such hyperplane $U$ we have $\mC_V\subseteq \mC_U$, since $V^\perp\leq U^\perp$.
We will show that we have equality.
\noindent
Let $U=\langle u_1,\ldots, u_{t-1}\rangle_{\F_q}$ and choose an isomorphism $\varphi: U \mapsto \langle e_1,\ldots, e_{t-1}\rangle_{\F_q}$. Notice that $\varphi$ extends to an isomorphism of $\F_q^n$, that fixes $V$.
Since $\mC_U=\mC_V$ if and only if $\mC_{\varphi(U)}=\mC_{\varphi(V)} = \mC_V$, we may assume that
$U=\langle e_1,\ldots, e_{t-1}\rangle_{\F_q}$.
Assume, towards a contradiction, that there exists a codeword $g$ in $\mC$, whose support $\sigma(g)$ is contained in $U^\perp$ but not in $V^\perp$. By Lemma \ref{lem:Fromrelinde}, this implies that for every $u\in U$, we have that $g\cdot u=0$ and there exists a vector $v\in V$, $v\not\in U$ such that $g\cdot v\ne 0$. Write such a vector $v$ as $v=u+ae_t$, where $u\in U$ and $a\in\F_q^\ast$, in which case, we have:
\begin{align}\label{eq:rcomponent}
    0\ne g\cdot v = g\cdot (u+ae_t) = g\cdot u+g\cdot ae_t = ag_t.
\end{align}
Since $c\in\mC^\perp$ and $g\in \mC$, we have that
$$0=g\cdot c = \sum_{i=1}^n g_ic_i = \sum_{i=1}^t g_ic_i = g_tc_t+\sum_{i=1}^{t-1}g_ic_i = g_tc_t,$$
where the last equality follows from the assumption that $g\in\mC_U$. Since  $\mC$ is non-degenerate, from \eqref{eq:rcomponent} we have that $ag_t\ne 0$ and so we arrive at a contradiction.
This shows that the $\mC_V = \mC_U$ for every subspace $U\in\Hyp(V)$ and, in other words, the support of every codeword in the dual code of $\mC$ is a cyclic space in the matroid $M$.

\end{proof}

\begin{corollary}\label{cor:mincodewords}
Let $c\in\mC$ be any codeword and let $\{c_1,\ldots,c_t\}\subseteq \mC$ be the set of all minimal codewords in $\mC$ whose supports are contained in $\sigma(c)$. Then
$$\sigma(c) = \sum_{i=1}^t\sigma(c_i).$$
\end{corollary}
\begin{proof}
Let $c\in\mC$ be a codeword. Then, by Theorem \ref{thm:cycsupp}, we have that $\sigma(c)$ is a cyclic space in $M_{\mC^\perp}$. \textcolor{black}{Hence, by Proposition \ref{thm:cycequivalence}, $\sigma(c)$ is the sum of the circuits of $M_{\mC^\perp}$ contained in it.} Finally, by Lemma \ref{lem:mincodcircuits}, we have that each circuit in $M_{\mC^\perp}$ is the support of a minimal codeword in~$\mC$.
\end{proof}

Note that the converse of Theorem \ref{thm:cycsupp} is not true in general, as Example \ref{ex:rankfinal} shows.

\begin{example}\label{ex:rankfinal}
Let $\C$ be the $[5,3]_{2^3/2}$ rank-metric code with generator matrix
$$ G=\begin{pmatrix}
1& \alpha & 1&0& \alpha^2 \\
0 & 1 & \alpha^5 & \alpha^2 & \alpha \\
0 & 0  & 1 & \alpha^4 & \alpha
\end{pmatrix},$$
where $\alpha^3=\alpha +1$. The dual code $\mC^\perp$ is a $[5,2]_{2^3/2}$ code. Let ${M}_\mC=(\F_2^5,r)$ be the $q$-matroid associated to $\mC$. Note that if $c,c'\in\mC^\perp$ and $c=\lambda c'$ for some $\lambda\in\F_{2^3}^\ast$, then $\sigma(c)=\sigma(c^\prime)$. Hence, there are at most $9$ different supports for the nonzero codewords of $\mC^\perp$. Here, we can check that there are indeed $9$ distinct supports for the codewords of $\mC^\perp$ and those are all cyclic spaces in ${M}_\mC$, by Theorem \ref{thm:cycsupp}.
Moreover, there are $11$ cyclic spaces in ${M}_\mC$, which are listed below according to their dimension.

\begin{align*}
\textnormal{Dimension } 0: \quad & A_0:=0. \\
\textnormal{Dimension } 2: \quad & A_1:=\langle e_1+e_4+e_5, e_2+e_4\rangle, \ A_2:=\langle e_1+e_3,e_4\rangle,  \\
& A_3:=\langle e_3+e_5, e_1+e_2+e_4+e_5\rangle. \\
 \textnormal{Dimension } 3:  \quad  & A_4:=\langle e_3+e_5, e_2, e_4 \rangle, \ A_5:=\langle e_2, e_1+e_5, e_3+e_4+e_5 \rangle, \\
 & A_6:=\langle e_1+e_5, e_2+e_4, e_3+e_4+e_5\rangle, \ A_7:=\langle e_1+e_5, e_2, e_3+e_5\rangle, \\
 &A_8:=\langle e_1+e_4+ e_5, e_2, e_3+e_4+e_5\rangle, \ A_9:=\langle e_1+e_5, e_2+e_3+e_5, e_4 \rangle. \\
  \textnormal{Dimension } 4:  \quad  & A_{10}:=\langle e_1+e_5, e_2, e_3+e_5, e_4\rangle.
\end{align*}

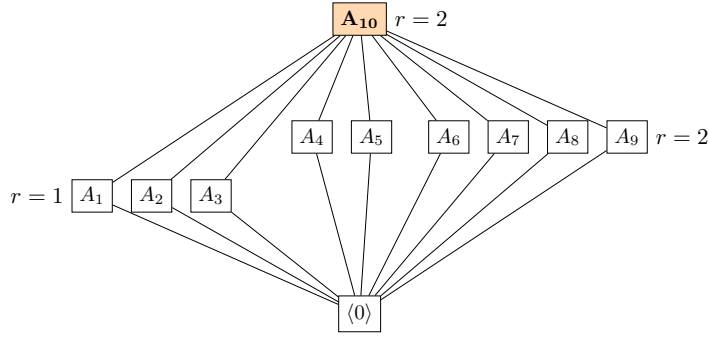
\begin{figure}[ht!]
\centering
\resizebox{0.6\textwidth}{!}{%
\begin{tikzpicture}
\node[shape=rectangle, draw=black] (00) at (0,0) {\small$ \langle 0 \rangle $};
 \node[shape=rectangle,draw=black, label=left:{$r = 1$}] (12) at (-4.5,2) {\small\(A_1\)};
 \node[shape=rectangle,draw=black] (13) at (-3.5,2) {\small\(A_2\)};
 \node[shape=rectangle,draw=black] (14) at (-2.5,2) {\small\(A_3\)};
 \node[shape=rectangle,draw=black] (15) at (-0.80,3) {\small\(A_4\)};
 \node[shape=rectangle,draw=black] (16) at (0.2,3) {\small\(A_5\)};
\node[shape=rectangle,draw=black] (17) at (1.5,3) {\small\( {A_6}\)};
 \node[shape=rectangle,draw=black] (18) at (2.5,3) {\small\(A_7\)};
 \node[shape=rectangle,draw=black] (19) at (3.5,3) {\small\(A_8\)};
 \node[shape=rectangle,draw=black, label=right:{$r = 2$}] (110) at (4.5,3) {\small\(A_9\)};
 \node[shape=rectangle,draw=black, fill=orange!30, label=right:{$r = 2$}] (20) at (0,5) {\small $\mathbf{A_{10}}$};

 \path [- ] (00) edge (12);
 \path [- ] (00) edge (13);
 \path [- ] (00) edge (14);
 \path [- ] (00) edge (15);
 \path [- ] (00) edge (16);
 \path [- ] (00) edge (17);
 \path [- ] (00) edge (18);
 \path [- ] (00) edge (19);
 \path [- ] (00) edge (110);
 \path [- ] (12) edge (20);
 \path [- ] (13) edge (20);
 \path [- ] (14) edge (20);
 \path [- ] (15) edge (20);
 \path [- ] (16) edge (20);
 \path [- ] (17) edge (20);
 \path [- ] (18) edge (20);
 \path [- ] (19) edge (20);
 \path [- ] (110) edge (20);
\end{tikzpicture}
}
\caption{The lattice of cyclic spaces of ${M}_\mC$ from Example \ref{ex:rankfinal}.}
\label{fig:latticecycles}
\end{figure}

In Figure \ref{fig:latticecycles}, the space $A_{10}$ has been highlighted because it is the only cyclic space that is not the support of any codeword in $\mC^\perp$. Finally, note that the all the cyclic spaces except from $A_0$ and $A_{10}$ are circuits. Indeed, one can easily observe that all the codewords in $\mC^\perp$ are minimal and their support is exactly one of the spaces $A_1,\ldots,A_{9}$.
With the aid of \textsc{Magma}, we checked that the $q$-matroid ${M}_\mC$ has $88$ flats and that among these only $5$ are also cyclic, namely $A_0, A_1, A_2, A_3$ and $A_{10}$.

Now, consider the matroid ${M}_{\mC^\perp}$, associated to the code $\mC^\perp$ and denote its rank function by~$r^\ast$. First of all note that ${M}_{\mC^\perp}$ contains the loop $L:=\langle e_1+e_3+e_5\rangle$, which is itself cyclic. In ${M}_{\mC^\perp}$ there are in total $88$ cyclic spaces and among them $65$ are also supports of codewords in $\mC$.
Finally, there are $11$ flats in ${M}_{\mC^\perp}$, which are listed below.

\begin{align*}
\textnormal{Dimension } 1: \quad & F_0:=\langle e_1+e_3+e_5\rangle. \\
\textnormal{Dimension } 2: \quad & F_1:=\langle e_1+e_3+e_5, e_4\rangle, \ F_2:=\langle e_1+e_3+e_5,e_4+e_5\rangle,  \\
& F_3:=\langle e_1+e_3+e_5, e_2+e_3\rangle, \ F_4:=\langle e_3+e_5, e_1 \rangle,\\
&F_5:=\langle e_1+e_3+e_5, e_2+e_3+e_4 \rangle, \ F_6:=\langle e_1+e_4+e_5, e_3+e_4, e_3+e_4\rangle. \\
 \textnormal{Dimension } 3:  \quad  &   F_7:=\langle e_1+e_5, e_2+e_4+e_5, e_3\rangle, \ F_8:=\langle e_1+e_3, e_2, e_5 \rangle , \\ &F_9:=\langle e_1+e_4, e_2+e_4, e_3+e_4+e_5\rangle. \\
  \textnormal{Dimension } 5:  \quad  & F_{10}:=\F_2^5.
\end{align*}
The lattice of flats of ${M}_{\mC^\perp}$ can be found in Figure \ref{fig:latticeflats}.

\begin{figure}[ht!]
\centering
\resizebox{0.6\textwidth}{!}{%
\begin{tikzpicture}
\node[shape=rectangle, draw=black] (00) at (0,0) {\small$ F_0 $};
 \node[shape=rectangle,draw=black, label=left:{$r = 1$}] (12) at (-4.5,2) {\small\(F_1\)};
 \node[shape=rectangle,draw=black] (13) at (-3.5,2) {\small\(F_2\)};
 \node[shape=rectangle,draw=black] (14) at (-2.5,2) {\small\(F_3\)};
 \node[shape=rectangle,draw=black] (15) at (-1.5,2) {\small\(F_4\)};
 \node[shape=rectangle,draw=black] (16) at (-0.5,2) {\small\(F_5\)};
\node[shape=rectangle,draw=black] (17) at (0.8,2) {\small\( {F_6}\)};
 \node[shape=rectangle,draw=black] (18) at (2.5,3) {\small\(F_7\)};
 \node[shape=rectangle,draw=black] (19) at (3.5,3) {\small\(F_8\)};
 \node[shape=rectangle,draw=black, label=right:{$r = 1$}] (110) at (4.5,3) {\small\(F_9\)};
 \node[shape=rectangle,draw=black, label=right:{$r = 2$}] (20) at (0,5) {\small $\F_2^5$};

 \path [- ] (00) edge (12);
 \path [- ] (00) edge (13);
 \path [- ] (00) edge (14);
 \path [- ] (00) edge (15);
 \path [- ] (00) edge (16);
 \path [- ] (00) edge (17);
 \path [- ] (00) edge (18);
 \path [- ] (00) edge (19);
 \path [- ] (00) edge (110);
 \path [- ] (12) edge (20);
 \path [- ] (13) edge (20);
 \path [- ] (14) edge (20);
 \path [- ] (15) edge (20);
 \path [- ] (16) edge (20);
 \path [- ] (17) edge (20);
 \path [- ] (18) edge (20);
 \path [- ] (19) edge (20);
 \path [- ] (110) edge (20);
\end{tikzpicture}
}
\caption{Lattice of flats of the matroid ${M}_{\mC^\perp}$ from Example \ref{ex:rankfinal}.}
\label{fig:latticeflats}
\end{figure}
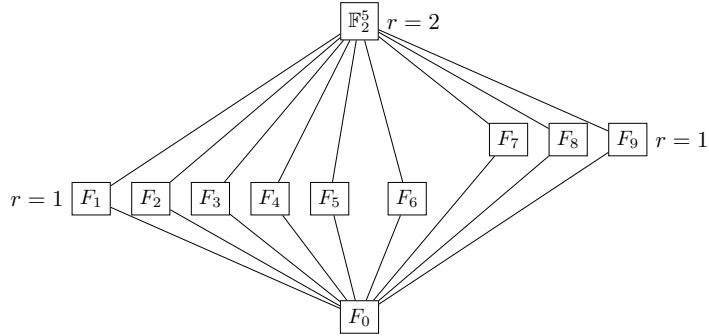
It is not difficult to see that all the flats in Figure \ref{fig:latticeflats} are the orthogonal complements of the cyclic spaces of Figure \ref{fig:latticecycles}, as Proposition \ref{thm:cycequivalence} states. Clearly, the cyclic flats of $M_{\mC^\perp}$ are the orthogonal complement of the cyclic flats in ${M}_\mC$; see Figure \ref{fig:lattcyclicflats}. In particular, $A_{10}^\perp = F_0$, $A_1^\perp=F_7$, $A_2^\perp=F_8$ and $A_3^\perp=F_9$.

Finally, using \textsc{Magma},  we found that there are exactly $33$ minimal codewords in $\mC$. The supports of these codewords are circuits in ${M}_{\mC^\perp}$. Consider $Z:=\langle e_1+e_5, e_2+e_5, e_3+e_5, e_4+e_5\rangle$ and observe that $Z$ is a cyclic space that is not a circuit (since its rank is $2$ and it is not the support of a minimal codeword of $\mC$). $Z$ contains exactly $3$ circuits, namely $\langle e_1+e_3, e_2+e_5 \rangle$, $\langle e_2+e_3+e_4+e_5, e_1+e_5 \rangle$ and $\langle e_1+e_4, e_2+e_4 \rangle$. It is easy to see that $Z$ is actually equal to the sum of the circuits it contains, as stated in Corollary \ref{cor:mincodewords}.

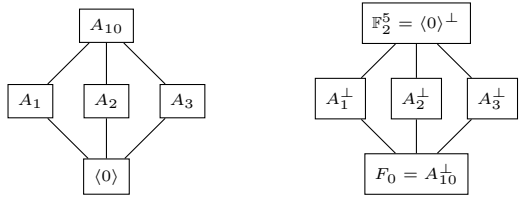
\begin{figure}[ht!]
\centering
\begin{subfigure}[b]{0.25\textwidth}
\centering
\begin{tikzpicture}
\node[shape=rectangle, draw=black] (00) at (0,0) {\tiny$\langle 0 \rangle$};
 \node[shape=rectangle,draw=black] (18) at (-1,1) {\tiny\(A_1\)};
 \node[shape=rectangle,draw=black] (19) at (0,1) {\tiny\(A_2\)};
 \node[shape=rectangle,draw=black] (110) at (1,1) {\tiny\(A_3\)};
 \node[shape=rectangle,draw=black] (20) at (0,2) {\tiny $A_{10}$};
 \path [- ] (00) edge (18);
 \path [- ] (00) edge (19);
 \path [- ] (00) edge (110);
 \path [- ] (18) edge (20);
 \path [- ] (19) edge (20);
 \path [- ] (110) edge (20);

\end{tikzpicture}
\end{subfigure}
\begin{subfigure}[b]{0.25\textwidth}
\centering
\begin{tikzpicture}
\node[shape=rectangle, draw=black] (001) at (0,0) {\tiny$ F_0=A_{10}^\perp$};
 \node[shape=rectangle,draw=black] (181) at (-1,1) {\tiny\(A_1^\perp\)};
 \node[shape=rectangle,draw=black] (191) at (0,1) {\tiny\(A_2^\perp\)};
 \node[shape=rectangle,draw=black] (1101) at (1,1) {\tiny\(A_3^\perp\)};
 \node[shape=rectangle,draw=black] (201) at (0,2) {\tiny $\F_2^5=\langle0\rangle^\perp$};

 \path [- ] (001) edge (181);
 \path [- ] (001) edge (191);
 \path [- ] (001) edge (1101);
 \path [- ] (181) edge (201);
 \path [- ] (191) edge (201);
 \path [- ] (1101) edge (201);
 \end{tikzpicture}
\end{subfigure}
\caption{The lattices of the cyclic flats of the $q$-matroids ${M}_\mC$ and ${M}_{\mC^\perp}$ from Example \ref{ex:rankfinal}.}
\label{fig:lattcyclicflats}
\end{figure}
\end{example}


\section{Final Remarks}\label{subsec:q-polymatroids}
$q$-Polymatroids and their connections to rank-metric codes were introduced in \cite{gorla2019rank} and \cite{shiromoto2019codes}.
In \cite{csirmaz2020cyclic}, it was shown that knowledge of the lattice of cyclic flats, along with the ranks of its elements, is sufficient to determine a polymatroid.
It is a natural question to ask whether or not a $q$-analogue of this result holds. We propose that an answer to this question may require more than a straightforward extension of the results presented here.

\begin{definition}[\cite{shiromoto2019codes}]
A $(q,r)$-polymatroid is a pair $M=(E, \rho)$ for which $r \in \Z$ and $\rho$ is an integer-valued function on the subspaces of $E$, satisfying the following axioms.
\begin{itemize}
	\item[($\rho$1)] $0\leq \rho(A) \leq r\dim A$, for all $A\in\mL(E)$.
	\item[($\rho$2)] $A\leq B \Rightarrow\rho(A)\leq \rho(B)$, for all $A,B\in\mL(E)$.
	\item[($\rho$3)] $\rho(A+B)+\rho(A\cap B)\leq \rho(A)+\rho(B)$, for all $A,B\in\mL(E)$.
\end{itemize}
Note that a $(q,1)$-polymatroid is a $q$-matroid.
\end{definition}

We can define cyclic spaces along with the cyclic operator as follows.

\begin{definition}\label{def:cyclic_qpol}
 For each $A\in\mL(E)$, define
 \begin{align*}
     \Cyc_\rho(A):=\{& x \leq A:   \rho(x)=0 \textnormal{ or } \rho(B+x)-\rho(B) <\rho(x),\textnormal{ for all } B\in\Hyp(A)\},
 \end{align*}
 and $\cyc_\rho(A)$ as the vector space sum of the cyclic spaces contained in $A$.
\end{definition}

In \cite{csirmaz2020cyclic} a rank function on the lattice of cyclic flats is defined via a convolution of the rank function of the polymatroid with a modular function on the underlying Boolean lattice (see Remark \ref{rem:convolution_submodula_modular}), which is an important device in obtaining the cryptomorphism in the polymatroid case.
It is known that a function $\mu: \mL(E) \to \Z$ is modular if and only if $\mu(\langle0\rangle)=0$ and $\mu$ is additive on $\mL(E)$, that is, if $\mu(X+Y)=\mu(X)+\mu(Y)$ for all $X,Y\in \mL(E)$ satisfying $X\cap Y=\{0\}$.
However, as the following result shows, the only additive function on $\mL(E)$ is a constant multiple of the dimension function.

\begin{proposition}\label{prop:additive}
Let $\mu:\mL(E) \to \Z$ be an additive function. Then there exists an integer $\ell \in \Z$ such that $\mu(A)=\ell \dim(A)$ for all $A \in \mL(E) $.
\end{proposition}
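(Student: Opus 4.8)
The plan is to reduce the statement to a single claim: every one-dimensional subspace (line) of $E$ receives the same value under $\mu$. Granting that, the general formula follows by writing an arbitrary subspace as an internal direct sum of lines and applying additivity repeatedly. I would first dispose of the degenerate case $\dim E=1$, where the only subspaces are $\langle 0\rangle$ and $E$, so that $\ell:=\mu(E)$ works using the hypothesis $\mu(\langle 0\rangle)=0$. So assume $\dim E\geq 2$.

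The heart of the argument is to show $\mu(x)=\mu(y)$ for any two distinct lines $x,y\leq E$. Since $x\cap y=\langle 0\rangle$, the sum $P:=x+y$ is a plane. Because $q$ is a prime power we have $q\geq 2$, so $P$ contains exactly $q+1\geq 3$ lines; pick a third line $z\leq P$ with $z\neq x$ and $z\neq y$. Then $x\cap z=y\cap z=\langle 0\rangle$, while $x+z=y+z=P$ since any two distinct lines of a plane span it. Additivity then gives
\[
\mu(x)+\mu(z)=\mu(x+z)=\mu(P)=\mu(y+z)=\mu(y)+\mu(z),
\]
whence $\mu(x)=\mu(y)$. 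Consequently there is a well-defined integer $\ell$ with $\mu(x)=\ell$ for every line $x\leq E$.

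Finally, for an arbitrary subspace $A\leq E$ of dimension $d$, I would fix a basis of $A$ and write $A=x_1\oplus\cdots\oplus x_d$ as an internal direct sum of lines. Since $(x_1\oplus\cdots\oplus x_{i-1})\cap x_i=\langle 0\rangle$ for each $i$, an induction on $d$ (base case $d=0$, where $\mu(A)=\mu(\langle 0\rangle)=0$) using additivity at each step yields $\mu(A)=\sum_{i=1}^{d}\mu(x_i)=d\ell=\ell\dim(A)$, as required. There is no serious obstacle here: the only point that needs care is the existence of the auxiliary line $z$ inside the plane, which is exactly where $q\geq 2$ enters (a projective line over $\F_q$ has at least three points), together with spelling out the induction on dimension.
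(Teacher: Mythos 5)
Your proof is correct and follows essentially the same strategy as the paper's: show that $\mu$ takes a common value $\ell$ on every one-dimensional subspace, then conclude by writing an arbitrary subspace as an internal direct sum of lines and applying additivity inductively. The only difference is the auxiliary object used to compare two lines $x,y$: you pick a third line $z$ in the plane $x+y$ (using $q+1\geq 3$), whereas the paper picks a hyperplane of $E$ containing neither $x$ nor $y$; both work, and your choice has the minor advantage of being local to the plane $x+y$, so the existence of the auxiliary object is immediate rather than requiring a small count of hyperplanes.
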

\begin{proof}
Let $x,y\in\mL(E)$ and choose $A\in\Hyp(E)$ that contains neither $x$ nor $y$. Clearly, $E=A+x=A+y$ and by the additivity of $\mu$ we have
$$ \mu(A)+\mu(x) = \mu(A+x)= \mu(E)=\mu(A+y) = \mu(A)+\mu(y),$$
which implies that $\mu$ is constant on all the one-dimensional subspaces of $E$. Hence there exists a constant $\ell\in\Z$, such that $\mu(A)=\ell \dim(A)$, for every $A\in\mL(E)$.
\end{proof}

Note that modular functions defined on a boolean lattice are not necessarily constant multiples of the cardinality function.
Generalizing the results of Section \ref{sec:cryptomorphism} to the $q$-polymatroid case would be possible if an analogue of \cite[Lemma 1]{csirmaz2020cyclic} could be obtained. Proposition \ref{prop:additive} implies that a $q$-analogue would be given by showing that for every $A\in\mL(E)$ we have that
$$ \rho(A) - \rho(\cyc_\rho(A)) = r\cdot (\dim(A)-\dim(\cyc_\rho(A)).$$

However, the following example shows that this is not always the case.

\begin{example}\label{ex:3x3}
Let $\mC\leq \F_3^{3\times 3}$ be the space of $3\times 3$ matrices generated by
\begin{align*}
    M_1:=\begin{pmatrix}
    1 & 2 &1 \\
    1& 2 & 2 \\
    1 & 1 & 1
    \end{pmatrix}, \   M_2:=\begin{pmatrix}
    0 & 2 &1 \\
    2& 1 & 1 \\
    0 & 1 & 2
    \end{pmatrix},  \   M_3:=\begin{pmatrix}
    2 & 2 &0 \\
    1& 1 & 1 \\
    0 & 1 & 2
    \end{pmatrix},  \   M_4:=\begin{pmatrix}
    0 & 2 &2 \\
    0& 0 & 0 \\
    2 & 0 & 1
    \end{pmatrix}.
\end{align*}
Consider the function
$$\rho : \mL(\F_3^3) \to \Z, \ A\mapsto 4-\dim_{\F_3}(\mC_A),$$ where  $\mC_A$ is the subspace of $\mC$ made of the matrices whose column span is contained in $A^\perp$.
From \cite{shiromoto2019codes}, $M=(\F_3^3,\rho)$ is a $(q,3)$-polymatroid.
Consider the $1$-dimensional space $ c=\langle (1,1,2) \rangle$ whose rank is $2$. It is easy to see that
$$ 2 = \rho(c) - \rho(\cyc_\rho(c)) \ne 3(\dim(c) - \dim(\cyc_\rho(c)) = 3.$$
\end{example}

 \bigskip

\section*{Acknowledgments}
The work of Gianira N. Alfarano  was supported by the Swiss National Science Foundation through grants no. 188430 and no. 210966.
The authors are thankful to Alessandro Neri for fruitful discussions.

\bigskip

\bibliographystyle{abbrv}
\bibliography{references.bib}

\end{document}